\newcommand{\norm}[1]{\lVert#1\rVert}
\newtheorem{theorem}{Theorem}
\newtheorem{lemma}{Lemma}
\newtheorem{proposition}{Proposition}
\newtheorem{corollary}{Corollary}
\theoremstyle{remark}
\newtheorem{remark}{Remark}
\title{Hörmander's Inequality and Point Evaluations in de Branges Space}
\author{Alex Bergman}
\date{\today}
\begin{document}

\maketitle

\begin{abstract}
    Let $f$ be an entire function of finite exponential type less than or equal to $\sigma$ which is bounded by $1$ on the real axis and satisfies $f(0) = 1$. Under these assumptions Hörmander showed that $f$ cannot decay faster than $\cos(\sigma x)$ on the interval $(-\pi/\sigma,\pi/\sigma)$. We extend this result to the setting of de Branges spaces with cosine replaced by the real part of the associated Hermite-Biehler function. We apply this result to study the point evaluation functional and associated extremal functions in de Branges spaces (equivalently in model spaces generated by meromorphic inner functions) generalizing some recent results of Brevig, Chirre, Ortega-Cerdà, and Seip.
\end{abstract}

\section{Introduction}

An entire function, $f : \mathbb{C} \to \mathbb{C}$ is said to be of finite exponential type $\sigma$ if
\begin{equation*}
    \limsup_{\lvert z \rvert \to \infty} \frac{\log \lvert f(z) \rvert}{\lvert z \rvert} = \sigma < \infty.   
\end{equation*}

In \cite{MR72210} Hörmander showed that if $f$ is a real entire function of exponential type at most $\sigma$ and $f \in L^{\infty}(\mathbb{R})$. Then at any point, $\xi$, where $f$ attains its maximum, $f(\xi) = \norm{f}_{\infty}$, it is bounded below by
\begin{equation}\label{eq:1}
    f(x) \geq \norm{f}_{\infty} \cos(\sigma (x-\xi)), \; \; -\frac{\pi}{\sigma} \leq x-\xi \leq \frac{\pi}{\sigma}.
\end{equation}
We find this inequality quite remarkable as it is a lower bound on the function. Essentially it says that a real entire function of exponential type attaining its maximum cannot decay faster than cosine until we hit a zero of sine. 


The condition that an entire $f$ is of most exponential type $\sigma$ and is bounded on the real line may be written as
\begin{equation*}
    \frac{f(z)}{e^{-i\sigma z}}, \frac{f^{\#}(z)}{e^{-i\sigma z}} \in H^{\infty}(\mathbb{C}_{+}),
\end{equation*}
where $f^{\#}(z) = \overline{f(\overline{z})}$, and $H^{\infty}(\mathbb{C}_{+})$ is the the set of bounded analytic function in the upper half-plane. The equivalence of these two definitions is well-known, but not entirely obvious, see \cite{MR2215727} for one approach to this problem. It is also a consequence of a theorem of Krein, see page 192 in \cite{MR1447633}. This definition is amenable to generalization in the following way. An entire, $E$, satisfying
\begin{equation*}
    \lvert E(\overline{z}) \rvert < \lvert E(z) \rvert, \; \; z \in \mathbb{C}_{+},
\end{equation*}
is called Hermite-Biehler and we write $E \in HB$. We define the space $\mathcal{H}^{\infty}(E)$ as the set of entire functions
\begin{equation*}
    \frac{f(z)}{E(z)}, \frac{f^{\#}(z)}{E(z)} \in H^{\infty}(\mathbb{C}_{+}),
\end{equation*}
with the norm 
\begin{equation*}
    \norm{f}_{\mathcal{H}^{\infty}(E)} = \norm{f/E}_{\infty} = \sup_{x\in \mathbb{R}} \left\lvert f(x)/E(x)\right\rvert.
\end{equation*}
The function $S_{\sigma}(z) = e^{-i\sigma z}$ is $HB$ for each $\sigma > 0$ and $\mathcal{H}^{\infty}(S_{\sigma})$ recovers the class of entire functions of exponential type at most $\sigma$ which are bounded on the real line, as expected. One can consider a whole scale of spaces by replacing $H^{\infty}(\mathbb{C}_{+})$ by the Hardy space of the upper half-plane $H^{p}(\mathbb{C}_{+})$, $0 < p < \infty$, in the above equation. In particular, the Hilbert space case $p = 2$ are the well studied de Branges spaces and appear in many contexts. De Branges spaces are the central object of the monograph \cite{MR0229011} and along with canonical systems the focus of \cite{https://doi.org/10.48550/arxiv.1408.6022, MR3890099}. Following the usual convention we call the spaces $\mathcal{H}^{p}(E)$ de Branges spaces as well.

Our goal is to generalize Hörmander's inequality \eqref{eq:1} to the setting of de Branges spaces. Let $E \in HB$. We may write
\begin{equation*}
    E(z) = A(z) + iB(z),
\end{equation*}
where $A = (E+E^{\#})/2$ and $B = (E-E^{\#})/2i$ are real entire functions (an entire function is called real if it is real on the real-axis). The functions $A$ and $B$ play the role of cosine and (negative) sine in this general setting. It is known that the real simple zeros of $A$ and $B$ are interlacing, see also Thereom \ref{thm:HB}. Replacing $E$ by $E_{\alpha} = e^{i\alpha} E$ produces another $HB$ function, but preserves the associated de Branges spaces, $\mathcal{H}^{p}(E_{\alpha}) = \mathcal{H}^{p}(E)$, $0 < p \leq \infty$. We shall write
\begin{equation*}
    E_{\alpha}(z) = A_{\alpha}(z) + iB_{\alpha}(z),
\end{equation*}
where now $A_{\alpha}$ and $B_{\alpha}$ are the real and imaginary parts of $E_{\alpha}$. Our main result is the following generalization of Hörmander's inequality to $\mathcal{H}^{\infty}(E)$.
\begin{theorem}\label{thm:thm1}
    Let $E \in HB$ have no real zeros and $f \in \mathcal{H}^{\infty}(E)$ be a real entire function. Suppose $f(\xi) = \lvert E(\xi) \rvert \norm{f/E}_{\infty}$, for some $\xi \in \mathbb{R}$. Let $\alpha \in \mathbb{R}$ be such that $E(\xi) = e^{-i\alpha}\lvert E(\xi) \rvert$. Denote by $b_{l}$ and $b_{r}$ the simple zeros of $B_{\alpha}$ to the left and right of $\xi$. Then
    \begin{equation*}
        f(x) \geq \norm{f/E}_{\infty} A_{\alpha}(x), \; \; b_{l} \leq x \leq b_{r}.
    \end{equation*}
\end{theorem}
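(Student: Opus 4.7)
The plan is to reduce the theorem to showing that a suitable quotient of real entire functions is a Herglotz (Nevanlinna) function on $\mathbb{C}_{+}$. First I would normalize so that $\norm{f/E}_{\infty} = 1$, giving $f(\xi) = \lvert E(\xi) \rvert = A_{\alpha}(\xi)$. Since both $f/\lvert E \rvert$ and $A_{\alpha}/\lvert E \rvert$ attain their common maximum value $1$ at $\xi$, differentiating at this interior maximum yields $f'(\xi) = A_{\alpha}'(\xi) = \lvert E \rvert'(\xi)$. Consequently the real entire function $A_{\alpha} - f$ has a zero of order at least two at $\xi$.

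Next I introduce
\begin{equation*}
    g(z) := \frac{A_{\alpha}(z) - f(z)}{B_{\alpha}(z)}.
\end{equation*}
Since $E_{\alpha}(\xi) = \lvert E(\xi) \rvert \in \mathbb{R}$ we have $B_{\alpha}(\xi) = 0$, and this zero is simple because $E$ has no real zeros. The double zero of $A_{\alpha} - f$ therefore cancels against it, so $g$ extends analytically across $\xi$ with $g(\xi) = 0$. The function $g$ is real meromorphic on $\mathbb{C}$ with real poles precisely at the remaining zeros $\{b_{k}\}_{k \neq 0}$ of $B_{\alpha}$, and is analytic in $\mathbb{C}_{+}$ because the Hermite-Biehler property forces $B_{\alpha}$ to be zero-free there.

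The core of the proof is to show that $g$ is Herglotz, i.e.\ $\operatorname{Im} g \geq 0$ on $\mathbb{C}_{+}$, and then read off the theorem from the Nevanlinna-Herglotz representation. I would verify this via two ingredients. First, the residues at the real poles are non-positive: at a zero $b_{k}$ of $B_{\alpha}$ with $b_{k} \neq \xi$ one has $A_{\alpha}(b_{k}) = (-1)^{k} \lvert E(b_{k}) \rvert$ and $B_{\alpha}'(b_{k}) = -(-1)^{k} \lvert E(b_{k}) \rvert \varphi'(b_{k})$, where $\varphi$ denotes the phase function of $E$, and combined with $\lvert f(b_{k}) \rvert \leq \lvert E(b_{k}) \rvert$ a short calculation gives $\operatorname{Res}_{b_{k}} g \leq 0$. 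Second, the bound $\lvert f/E_{\alpha} \rvert \leq 1$ in $\mathbb{C}_{+}$ (via Phragmén-Lindelöf) together with $\lvert B_{\alpha} \rvert \geq \tfrac{1}{2}(\lvert E_{\alpha} \rvert - \lvert E_{\alpha}^{\#} \rvert)$ bounds $g$ in the upper half-plane, excluding a positive linear term in the Nevanlinna representation. Equivalently, writing $\psi = f/E_{\alpha} \in H^{\infty}(\mathbb{C}_{+})$ and $\Theta_{\alpha} = E_{\alpha}^{\#}/E_{\alpha}$, one arrives at the identity
\begin{equation*}
    g(z) = -i\,\frac{1 + \Theta_{\alpha}(z) - 2\psi(z)}{1 - \Theta_{\alpha}(z)},
\end{equation*}
and the sign of $\operatorname{Im} g$ can be extracted from $\lvert \psi \rvert \leq 1$ together with the reality condition $\Theta_{\alpha} \bar{\psi} = \psi$ on $\mathbb{R}$.

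Granted the Herglotz property, $g$ is real-analytic and monotonically increasing on each maximal real interval between consecutive poles. On $(b_{l}, b_{r})$ its only zero is at $\xi$, so $g < 0$ on $(b_{l}, \xi)$ and $g > 0$ on $(\xi, b_{r})$. Recalling $B_{\alpha} > 0$ on $(b_{l}, \xi)$ and $B_{\alpha} < 0$ on $(\xi, b_{r})$, we conclude $A_{\alpha} - f = B_{\alpha} g \leq 0$ throughout $(b_{l}, b_{r})$, and at the endpoints the inequality is immediate from $A_{\alpha}(b_{l,r}) = -\lvert E(b_{l,r}) \rvert \leq f(b_{l,r})$. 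Rescaling by $\norm{f/E}_{\infty}$ recovers the theorem. The main obstacle is thus establishing the Herglotz property for $g$: the residue signs follow from elementary interlacing bookkeeping, but ruling out the appearance of a linear contribution at infinity in the Nevanlinna representation requires careful use of the Hermite-Biehler lower bound on $\lvert B_{\alpha} \rvert$ in the upper half-plane and the boundedness of $f/E_{\alpha}$.
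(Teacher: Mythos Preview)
Your approach is essentially a reformulation of the paper's. The paper sets $\varphi_{\alpha}=f-e^{i\alpha}E$, proves (Lemma~\ref{lemma:main_lemma}) that $\varphi_{\alpha}\in\overline{HB}$, and then uses the Hermite--Biehler interlacing of the zeros of $\Omega_{\alpha}=f-A_{\alpha}$ and $\Gamma_{\alpha}=-B_{\alpha}$ to force $\Omega_{\alpha}$ to keep constant sign on $[b_{l},b_{r}]$; the sign is then pinned down by computing $\Omega_{\alpha}''(\xi)>0$. Your function is precisely $g=\Omega_{\alpha}/\Gamma_{\alpha}$, and the Herglotz property of $g$ is \emph{equivalent} to $\varphi_{\alpha}\in\overline{HB}$: writing $g=i\,(1+\varphi_{\alpha}^{\#}/\varphi_{\alpha})/(1-\varphi_{\alpha}^{\#}/\varphi_{\alpha})$ one sees that $\operatorname{Im}g\ge 0$ on $\mathbb{C}_{+}$ holds exactly when $|\varphi_{\alpha}^{\#}/\varphi_{\alpha}|\le 1$ there. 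Once that is known, your monotonicity argument is in fact slightly slicker than the paper's, since it absorbs Lemmas~\ref{lemma:simple_zero} and \ref{lemma:positivity} into the single observation that a non-constant Herglotz function can only have simple real zeros and is strictly increasing between real poles.

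The genuine gap is in how you propose to establish that $g$ is Herglotz. Checking that the residues at the $b_{k}$ are non-positive and that $g$ is bounded on horizontal lines is \emph{not} sufficient: those conditions do not rule out, for instance, a negative multiple of $z$ or more subtle growth, and a real meromorphic function with only real simple poles of negative residue need not be Herglotz without further control. As for the identity route, note first a sign slip: one gets $g=+i\,(1+\Theta_{\alpha}-2\psi)/(1-\Theta_{\alpha})$, not $-i$. More importantly, the constraints $|\psi|\le 1$ and $|\Theta_{\alpha}|<1$ alone do \emph{not} force $\operatorname{Im}g\ge 0$; a direct computation gives
\[
\operatorname{Im}g=\frac{1-|\Theta_{\alpha}|^{2}-2\operatorname{Re}\bigl(\psi(1-\overline{\Theta_{\alpha}})\bigr)}{|1-\Theta_{\alpha}|^{2}},
\]
and the numerator is exactly $|\psi-1|^{2}-|\psi-\Theta_{\alpha}|^{2}$, i.e.\ the inequality $|f-E_{\alpha}^{\#}|\le |f-E_{\alpha}|$ on $\mathbb{C}_{+}$. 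That is precisely the statement $\varphi_{\alpha}\in\overline{HB}$, and it does not follow from $|\psi|\le 1$ and the boundary relation $\Theta_{\alpha}\bar\psi=\psi$ on $\mathbb{R}$ without an additional maximum-principle argument using $f^{\#}/E\in H^{\infty}$ as well. In short, you still need the content of Lemma~\ref{lemma:main_lemma}; supply that and your argument goes through.
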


The lower bound is sharp since $A_{\alpha}$ satisfies the hypotheses of the theorem. If $E(0) = 1$ and $f \in \mathcal{H}^{\infty}(E)$ is a real entire function, such that $f/E$ attains its maximum at $0$ we get a particularly simple version of the Theorem.

\begin{corollary}\label{cor:at_0}
    Let $E \in HB$ have no real zeros and $E(0) = 1$. If $f \in \mathcal{H}^{\infty}(E)$ is a real entire function and $f(0) = \norm{f/E}_{\infty}$ = 1. Then
    \begin{equation*}
        f(x) \geq A(x), \; \; b_{-1} \leq x \leq b_{1},
    \end{equation*}
    where $b_{-1}$ is the first negative zero of $B$ and $b_{1}$ is the first positive zero of $B$.
\end{corollary}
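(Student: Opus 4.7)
The plan is to deduce the corollary directly from Theorem \ref{thm:thm1} by specializing to $\xi = 0$ and choosing the rotation parameter $\alpha$ appropriately. Since $E(0) = 1$ is real and positive, we have $|E(0)| = 1$, and the requirement $E(0) = e^{-i\alpha}|E(0)|$ of Theorem \ref{thm:thm1} is satisfied by $\alpha = 0$. With this choice $E_{\alpha} = E$, and hence $A_{\alpha} = A$ and $B_{\alpha} = B$.

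Next I would check that every hypothesis of Theorem \ref{thm:thm1} is inherited. The assumption that $E \in HB$ has no real zeros is given. The extremality assumption $f(\xi) = |E(\xi)|\norm{f/E}_{\infty}$ at $\xi = 0$ reduces to $f(0) = 1 \cdot \norm{f/E}_{\infty}$, which is precisely the normalization $f(0) = \norm{f/E}_{\infty} = 1$ stated in the corollary. Reality of $f$ and membership in $\mathcal{H}^{\infty}(E)$ are also given.

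Finally, with $\alpha = 0$, the simple zeros of $B_{\alpha} = B$ immediately to the left and right of $\xi = 0$ are by definition $b_{-1}$ and $b_1$. Note in particular that $B(0) = \operatorname{Im} E(0) = 0$, so $0$ itself lies between these two adjacent simple zeros, and the interval $[b_{-1}, b_1]$ is the one produced by Theorem \ref{thm:thm1}. Invoking that theorem yields
\[
    f(x) \;\geq\; \norm{f/E}_{\infty}\, A_{\alpha}(x) \;=\; A(x), \qquad b_{-1} \leq x \leq b_1,
\]
which is the stated inequality.

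There is essentially no obstacle here beyond matching notation: the corollary is the "normalized at the origin" version of the theorem, and the only substantive check is that choosing $\alpha = 0$ is forced by the condition $E(0) = 1$, which in turn collapses $A_{\alpha}, B_{\alpha}$ to $A, B$ and places $0$ between the $b_{-1}$ and $b_1$ appearing in the corollary's statement.
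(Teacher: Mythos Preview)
Your proof is correct and takes exactly the approach the paper intends: the paper simply states that the corollary ``follows immediately from the theorem,'' and what you have written is precisely the verification that specializing Theorem~\ref{thm:thm1} to $\xi=0$ with $\alpha=0$ (forced by $E(0)=1$) yields the claimed inequality. One small remark on phrasing: since $B(0)=\operatorname{Im}E(0)=0$, the point $0$ is itself a (simple) zero of $B$, so $b_{-1}$ and $b_{1}$ are the zeros of $B$ \emph{adjacent} to the zero at $0$---this is exactly what $b_l,b_r$ mean in Theorem~\ref{thm:thm1}, where $\xi$ is always a zero of $B_\alpha$ by construction.
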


Note that Hörmander's inequality follows from the corollary and the fact that real translations are isometric on the class $PW_{\sigma}^{\infty} = \mathcal{H}^{\infty}(S_{\sigma})$ (this is, in general, not true for de Branges spaces). We also state a variant of the Theorem which is not dependent on the sign of $f$ at $\xi$.

\begin{corollary}\label{cor:sign_free}
    Let $E \in HB$ have no real zeros and $f \in \mathcal{H}^{\infty}(E)$ be a real entire function. Suppose $\lvert f (\xi) \rvert = \lvert E(\xi) \rvert \norm{f/E}_{\infty}$, for some $\xi \in \mathbb{R}$. Let $\alpha \in \mathbb{R}$ be such that $E(\xi) = e^{-i\alpha}\lvert E(\xi) \rvert$. Denote by $a_{l}$ and $a_{r}$ the simple zeros of $A_{\alpha}$ to the left and right of $\xi$. Then
    \begin{equation*}
        \lvert f(x) \rvert \geq \norm{f/E}_{\infty} A_{\alpha}(x), \; \; a_{l} \leq x \leq a_{r}.
    \end{equation*}
\end{corollary}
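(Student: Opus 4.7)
The plan is to deduce Corollary \ref{cor:sign_free} directly from Theorem \ref{thm:thm1} applied to $\pm f$, together with a location argument that places the interval $[a_l, a_r]$ inside the interval $[b_l, b_r]$ from Theorem \ref{thm:thm1}. The first step is to unpack the hypothesis: by the choice of $\alpha$, we have $E_\alpha(\xi) = e^{i\alpha} E(\xi) = |E(\xi)|$, so $A_\alpha(\xi) = |E(\xi)|$ and $B_\alpha(\xi) = 0$. In particular, $\xi$ is itself a (simple) zero of $B_\alpha$, and $A_\alpha(\xi) > 0$ since $E$ has no real zeros.

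Next, I would invoke the interlacing of the real simple zeros of $A_\alpha$ and $B_\alpha$ (mentioned before Theorem \ref{thm:thm1} and recorded as Theorem \ref{thm:HB}). Since $\xi$ is a zero of $B_\alpha$ and $a_l, a_r$ are the adjacent zeros of $A_\alpha$, and since $b_l, b_r$ are the zeros of $B_\alpha$ immediately to the left and right of $\xi$, interlacing forces
\begin{equation*}
    b_l < a_l < \xi < a_r < b_r,
\end{equation*}
so that $[a_l, a_r] \subset [b_l, b_r]$. Moreover, on the closed interval $[a_l, a_r]$ the function $A_\alpha$ has constant sign (simple zeros only at the endpoints), and since $A_\alpha(\xi) > 0$ this sign is nonnegative; thus $A_\alpha(x) \geq 0$ on $[a_l, a_r]$.

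The final step is a sign split. If $f(\xi) = |E(\xi)|\,\norm{f/E}_\infty$, Theorem \ref{thm:thm1} yields $f(x) \geq \norm{f/E}_\infty A_\alpha(x)$ for $x \in [b_l, b_r]$, and in particular on $[a_l, a_r]$ where the right-hand side is nonnegative, so $|f(x)| \geq f(x) \geq \norm{f/E}_\infty A_\alpha(x)$. If instead $f(\xi) = -|E(\xi)|\,\norm{f/E}_\infty$, I would apply Theorem \ref{thm:thm1} to $-f$, which is a real entire function in $\mathcal{H}^\infty(E)$ with the same norm and with $(-f)(\xi) = |E(\xi)|\,\norm{(-f)/E}_\infty$ (the value of $\alpha$ is unchanged since it is determined by $E(\xi)$, not by $f$). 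This gives $-f(x) \geq \norm{f/E}_\infty A_\alpha(x) \geq 0$ on $[a_l, a_r]$, so $|f(x)| = -f(x) \geq \norm{f/E}_\infty A_\alpha(x)$. The trivial case $\norm{f/E}_\infty = 0$ is automatic.

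The argument has no genuine obstacle beyond the geometric placement of $\xi$ among the zeros of $A_\alpha$ and $B_\alpha$; the only thing worth double-checking is that $\xi$ is indeed a zero of $B_\alpha$, which follows from the identity $|E(\xi)|^2 = A_\alpha(\xi)^2 + B_\alpha(\xi)^2$ together with $A_\alpha(\xi) = |E(\xi)|$. Everything else is immediate from Theorem \ref{thm:thm1} and the interlacing property.
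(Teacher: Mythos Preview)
Your argument is correct and is precisely the natural deduction from Theorem \ref{thm:thm1}; the paper itself gives no details beyond stating that Corollary \ref{cor:sign_free} ``follows immediately from the theorem,'' and your write-up is exactly how one would unpack that remark. The only point worth emphasizing is that $\xi$ is itself a zero of $B_\alpha$ (as you note), so that the $b_l,b_r$ of Theorem \ref{thm:thm1} are the \emph{next} zeros of $B_\alpha$ on either side, and interlacing then gives $b_l<a_l<\xi<a_r<b_r$ as you claim.
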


Hörmander's proof of \eqref{eq:1} is based on counting the number of zeros of the function $f(x)-\cos(\sigma x)$ (in the case $\norm{f}_{\infty} \leq 1$ and $f(0) = 1$). He achieves this by approximating the function with carefully chosen polynomials and applying Hurwitz Theorem. We shall prove our generalization by counting zeros of the function $f(x)-A_{\alpha}(x)$. However, in this general framework Hörmander's approximation scheme seems difficult to apply. Instead we shall use a variant of the Hermite-Biehler Theorem to control the number of zeros.

Before we begin with the proof of Theorem \ref{thm:thm1} we shall describe our applications of the result.

\subsection{Applications}

For $0<p\leq\infty$ let $PW_{\sigma}^{p}$ be the set of entire functions of exponential type at most $\sigma$ which are $p$-integrable on the real line
\begin{equation*}
    \norm{f}_{p}^{p} = \int_{-\infty}^{\infty} \lvert f(x) \rvert^{p}dx < \infty.
\end{equation*}
For $p = 2$ one recovers the usual Paley-Wiener space which is the Fourier image of functions supported on $[-\sigma, \sigma]$ (under a suitable normalization of the Fourier transform). Analogous results are true for other $p$, but must be interpreted in the sense of distributions if $p > 2$, see, for example page 172 in \cite{MR1447633}. In terms of de Branges spaces the function $S_{\sigma}(z) = e^{-i\sigma z}$ is $HB$ for each $\sigma > 0$ and $\mathcal{H}^{p}(S_{\sigma}) = PW_{\sigma}^{p}$. It is well-known that $PW_{\sigma}^{p} \subset PW_{\sigma}^{\infty}$, for all $0 < p < \infty$, see page 51 in \cite{MR1400006}. Thus the embedding operator
\begin{equation*}
    i_{p,\sigma} : PW_{\sigma}^{p} \to PW_{\sigma}^{\infty},
\end{equation*}
is bounded. Its norm appears in several problems in analysis. In \cite{MR1260445} Hörmander and Bernhardsson connect the norm of $i_{1, \pi}$ to a Bohr-type estimate for the Cauchy-Riemann operator in $\mathbb{R}^{2}$. More recently, in the work of Levin and Lubinsky, it has appeared as a scaling limit of certain Christoffel functions, see \cite{MR3317903, MR3691058}. In \cite{MR4162465} the norm of $i_{p,\sigma}$ has been connected to the study of Nikolski estimates for trigonometric polynomials.

To explain the known estimates we shall for definiteness consider $\sigma = \pi$ and $i_{p} = i_{p,\pi}$ in what follows. There has been a substantial amount of recent activity in estimating the norm of $i_{p}$. Until recently the best known bound was $\norm{i_{p}}^{p} \leq \lceil p/2 \rceil$, obtained using what is known as the power trick, see \cite{MR27332, MR114918, MR4417628}. The power trick will be explained later in the introduction. A refinement of the power trick allowed the authors of \cite{point_eval_in_PW} to obtain the current best known bound for small $p$,
\begin{equation}\label{eq:small_p_bound}
    \norm{i_{p}}^{p} < p/2, \; 0 < p <\infty.
\end{equation}
For large values of $p$ the best known bound is
\begin{equation}\label{eq:large_p_bound}
    \norm{i_{p}}^{p} \leq \sqrt{\frac{\pi p}{2}} + O(p^{-1/2}),
\end{equation}
and this is asymptotically sharp, see \cite{point_eval_in_PW}. For improvements for specific values of $p$ see also \cite{point_eval_in_PW, instanes2024optimizationproblempointevaluationpaleywiener, bondarenko2024hormanderbernhardssonsextremalfunction}. The case $p=1$ is particularly favorable and very precise estimates are presented in \cite{MR1260445}, see also \cite{bondarenko2024hormanderbernhardssonsextremalfunction}.

Since real translations are isometric in Paley-Wiener spaces the norm of the embedding operator, $i_{p,\sigma}$, is the same as the norm of the point evaluation functional at $0$, i.e. the smallest constant, such that
\begin{equation*}
    \lvert f(0) \rvert \leq C \norm{f}_{p}, \; f \in PW_{\sigma}^{p}.
\end{equation*}
Thus when we are faced with the task of generalizing the Paley-Wiener results to de Branges spaces we obtain two separate problems, the first being the study of the norm of the embedding operator (if the embedding is possible), and the second being the study of the point evaluation functional. Let us begin by discussing the embedding operator.

\subsubsection{Norm of the Embedding Operator}

We are interested in the embedding of $\mathcal{H}^{p}(E)$ into $\mathcal{H}^{\infty}(E)$. However, it is not always true that $\mathcal{H}^{p}(E)$ is contained inside $\mathcal{H}^{\infty}(E)$. We shall now present a theorem due to Dyakonov which essentially gives a complete characterization of when the embedding is possible. We shall present the solution in terms of the so called phase-function $\varphi$. The function
\begin{equation*}
    \Theta_{E}(z) = \frac{E(z)^{\#}}{E(z)},
\end{equation*}
is analytic in the upper half-plane and meromorphic in the complex plane. In addition, it is unimodular on the real-axis. Thus $\Theta_{E}$ is a so-called meromorphic inner-function and may be factored as
\begin{equation*}
    \Theta_{E}(z) = e^{i a z}S(z), z \in \mathbb{C}_{+},    
\end{equation*}
where $S$ is a Blaschke product and $a \geq 0$. A phase function is any smooth branch of the argument of $\Theta_{E}$ on the real-axis,
\begin{equation*}
    \Theta_{E}(x) = e^{i\varphi(x)}.
\end{equation*}
Phase functions are unique up to a constant factor of $2\pi$. Since our conditions will involve the derivative of $\varphi$ we shall slightly abuse notation and say the phase function instead of a phase function. We also see that in terms of the argument of $E$ we have $\varphi(x) = -2\arg(E(x))$. The phase function is an increasing function as can be seen from the relation
\begin{equation*}
    \varphi'(x) = -i \frac{\Theta_{E}'(x)}{\Theta_{E}(x)} = a + 2\sum_{n} \frac{y_{n}}{(x-x_{n})^{2}+y_{n}^{2}} > 0,
\end{equation*}
where $a$ and $z_{n} = x_{n}+iy_{n}$ are the parameters appearing in the canonical factorization of $\Theta_{E}$ as an exponential times a Blaschke product. Dyakonov has shown that for $1 < p < q \leq \infty$ the condition $\varphi' \in L^{\infty}(\mathbb{R})$ is necessary and sufficient for the embedding $\mathcal{H}^{p}(E) \subset \mathcal{H}^{q}(E)$, see \cite{MR1111913}. For $0 < p \leq 1$ the condition is still sufficient, but it is no longer necessary, see \cite{MR2231626}. We state this result as a theorem.

\begin{theorem}[Dyakonov \cite{MR1111913}]\label{thm:dyakonov}
    Suppose $\varphi' \in L^{\infty}$ and $0 < p < q \leq \infty$. Then $\mathcal{H}^{p}(E) \subset \mathcal{H}^{q}(E)$. Moreover, if $0 < p < \infty$, then
    \begin{equation*}
        \lim_{x \to \pm \infty} \left\lvert \frac{f(x)}{E(x)} \right\rvert^{p} = 0, \; \; f \in \mathcal{H}^{p}(E).
    \end{equation*}
\end{theorem}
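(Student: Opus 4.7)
The plan is to reduce everything to establishing a pointwise embedding bound $\|f/E\|_\infty \leq C(p, \|\varphi'\|_\infty) \|f\|_{\mathcal{H}^p(E)}$. Once this is known, the embedding $\mathcal{H}^p(E) \subset \mathcal{H}^q(E)$ for $p < q < \infty$ follows from the one-line interpolation inequality
\[ \int_{\mathbb{R}} |f/E|^q \leq \|f/E\|_\infty^{q-p} \int_{\mathbb{R}} |f/E|^p, \]
and the vanishing of $|f/E|^p$ at infinity is obtained as a corollary of a Bernstein-type regularity statement.

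For the $p = 2$ case of the pointwise bound, one uses the reproducing kernel directly. The space $\mathcal{H}^2(E)$ is a reproducing kernel Hilbert space with kernel
\[ K_E(w,z) = \frac{\overline{E(w)}E(z) - E^\#(z)\overline{E^\#(w)}}{2\pi i(\bar w - z)}, \]
and an application of L'Hôpital at the diagonal $w = z = x \in \mathbb{R}$ yields $K_E(x,x) = |E(x)|^2 \varphi'(x)/(2\pi)$. Cauchy--Schwarz then gives $|f(x)|^2/|E(x)|^2 \leq (\|\varphi'\|_\infty/2\pi)\|f\|_{\mathcal{H}^2(E)}^2$.

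For a general $0 < p < \infty$, I would reduce to $p = 2$ via the classical power trick, carried out in the model space $K_{\Theta_E}^p = H^p(\mathbb{C}_+) \cap \Theta_E \overline{H^p(\mathbb{C}_+)}$, to which $\mathcal{H}^p(E)$ is isometrically isomorphic via $f \mapsto f/E$. Factoring $g = f/E$ as $I \cdot O$ (inner times outer) in $H^p(\mathbb{C}_+)$, the zero-freeness of $O$ in $\mathbb{C}_+$ permits the formation of $O^{p/2} \in H^2(\mathbb{C}_+)$ satisfying $\|O^{p/2}\|_{H^2}^2 = \|g\|_{H^p}^p$; a parallel treatment of $f^\#/E$, together with the boundary identity $(f^\#/E)|_{\mathbb{R}} = \overline{g}\,\Theta_E$ that characterizes membership in $K_{\Theta_E}^p$, allows one to assemble these pieces into an element of an auxiliary de Branges space $\mathcal{H}^2(\widetilde E)$ whose phase derivative is $(p/2)\varphi'$ and hence still bounded. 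The $p = 2$ bound applied in this auxiliary space and unwound gives the pointwise estimate for $p$. The main technical obstacle I expect is in this assembly step: the naive choice $\widetilde E = E^{p/2}$ is not entire for non-integer $p/2$, so the correct $\widetilde E$ must be built intrinsically from the outer factors in the model-space picture. A slicker alternative which bypasses the power trick is to appeal directly to a Bernstein-type inequality of the form $\|g'\|_p \leq C\,\|\Theta_E'\|_\infty \|g\|_p$ valid in $K_{\Theta_E}^p$, combined with the Sobolev embedding $W^{1,p}(\mathbb{R}) \hookrightarrow L^\infty(\mathbb{R})$.

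The decay at infinity follows from the pointwise embedding together with the Bernstein inequality applied in $L^\infty$. Once the embedding is established, $g = f/E \in K_{\Theta_E}^\infty$, so Bernstein yields $g' \in L^\infty$ and thus $g$ is Lipschitz on $\mathbb{R}$. A bounded Lipschitz function lying in $L^p(\mathbb{R})$ for some $p < \infty$ must vanish at infinity: otherwise one could extract a sequence $|x_n| \to \infty$ with $|g(x_n)| > \varepsilon$, and the Lipschitz condition would force $|g| \geq \varepsilon/2$ on disjoint intervals of positive length around each $x_n$, contradicting the integrability of $|g|^p$.
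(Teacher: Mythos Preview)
The paper does not give its own proof of this theorem: it is stated as a result of Dyakonov, and the only commentary is the remark that a proof in the model-space setting appears in \cite{MR2231626} and that the theorem can also be obtained from Lemma~2 of \cite{MR1923406}. So there is no in-paper argument to compare against; your Bernstein-inequality alternative is in fact precisely the route the paper's remark gestures at, and your final decay argument (Lipschitz plus $L^p$ forces vanishing at infinity) is the natural corollary.

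On correctness: the $p=2$ reproducing-kernel computation, the interpolation $\int|f/E|^q\le\|f/E\|_\infty^{q-p}\int|f/E|^p$, and the decay step are all fine. The power-trick paragraph, however, is not a proof---you yourself flag the ``assembly step'' as the obstacle, and it is a real one: producing an auxiliary Hermite--Biehler function $\widetilde E$ with phase derivative $(p/2)\varphi'$ and placing $IO^{p/2}$ inside $\mathcal{H}^2(\widetilde E)$ requires work you have not done. More seriously, your Bernstein alternative has a genuine gap in the range $0<p<1$: the Sobolev embedding $W^{1,p}(\mathbb{R})\hookrightarrow L^\infty(\mathbb{R})$ is false there (indeed $W^{1,p}$ is not even a normed space for $p<1$). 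A smooth function built from bumps of height $n$ and width $n^{-a}$ with $a$ large has both $g$ and $g'$ in $L^p$ for small $p$ yet is unbounded, so Bernstein plus Sobolev cannot close the argument. For $p<1$ you need something else---for instance, the observation that $\varphi'\in L^\infty$ forces the zeros of $E$ to stay at distance $\ge 2/\|\varphi'\|_\infty$ from $\mathbb{R}$, so $f/E$ extends analytically to a uniform strip and a subharmonic mean-value estimate on $|f/E|^p$ over discs centred on the real axis gives the pointwise bound directly; or a careful inner--outer reduction that lands in some $K_\Theta^q$ with $q\ge1$, where your kernel/H\"older argument applies.
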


\begin{remark}
    Let us mention that in \cite{MR2231626} the result is proved for model spaces. This does not cause any difficulties. The theorem can also be proved using Lemma 2 in \cite{MR1923406}.
\end{remark}

Since we are interested in studying the embedding for all $0 < p < \infty$ the condition $\varphi' \in L^{\infty}$ is necessary and sufficient for our purposes. Apart from this the condition $\varphi' \in L^{\infty}$ has appeared in many important works. Below we mention only a few. For example, it was used in \cite{MR2628803} to characterize Pólya sequences. In \cite{MR1923406, MR1111913} to characterize boundedness and compactness of the derivative operator on model spaces. In \cite{MR4722033} to study Carleson measures and oversampling in de Branges spaces. Similar, stronger, conditions also play an important role in Riesz bases for Paley-Wiener and de Branges spaces, see \cite{MR1923965, MR2264717}. In connection with this we also mention the interesting paper \cite{MR3631454}, and survey \cite{alexei_rupam_bounded_phase} where the authors study the problem of sequences, $\Lambda$, supporting a meromorphic inner function, $\Theta$, satisfying $\left\{x \in \mathbb{R} : \Theta(x) = 1 \right\} = \Lambda$ with $\varphi' \in L^{\infty}$.

In \cite{MR1111913} Dyakonov was only interested in the possibility of embedding, not estimating the norm. In fact, the implied constant obtained there blows up as $p \to \infty$. Suppose $\varphi' \in L^{\infty}$ and define $i_{p,E}$ to be the embedding operator
\begin{equation*}
    i_{p, E} : \mathcal{H}^{p}(E) \to \mathcal{H}^{\infty}(E).
\end{equation*}
Denote the norm of $i_{p,E}$ by $C(p,E) = \norm{i_{p,E}}$. Our estimate for the embedding operator is the following.

\begin{theorem}\label{thm:embedding_norm}
    Let $E \in HB$ have no real zeros. Suppose $\varphi' \in L^{\infty}$. Then
    \begin{equation*}
        C(p, E)^{p} \leq \norm{\varphi'}_{\infty}\frac{1}{2\sqrt{2\pi}}\left( \sqrt{p} + O(p^{-1/2})\right),
    \end{equation*}
    where the implicit constant does not depend on $\norm{\varphi'}_{\infty}$.
\end{theorem}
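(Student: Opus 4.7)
The plan is to apply the generalized Hörmander inequality of Corollary \ref{cor:sign_free} to a real-valued extremizer and then convert the resulting pointwise lower bound into an integral estimate by means of the phase-function change of variables. First I would reduce to a real $f\in\mathcal{H}^p(E)$ attaining its extremum at a real point. Since $\varphi'\in L^\infty$, Dyakonov's theorem (Theorem \ref{thm:dyakonov}) gives $|f/E|\to 0$ at infinity, so $\norm{f/E}_\infty$ is attained at some $\xi\in\mathbb{R}$. Rotating $f$ by a unimodular constant to make $f(\xi)$ positive and then replacing $f$ by $u=(f+f^\#)/2$, one checks (using that $f/E$ and $f^\#/E$ lie in $H^p(\mathbb{C}_+)$) that $u\in\mathcal{H}^p(E)$ is real entire with $\norm{u/E}_p\leq\norm{f/E}_p$ and $\norm{u/E}_\infty = u(\xi)/|E(\xi)| = \norm{f/E}_\infty$. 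Hence it suffices to bound $\norm{f/E}_\infty/\norm{f/E}_p$ under the assumption that $f$ is real and $f(\xi)=|E(\xi)|\norm{f/E}_\infty$.

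With $\alpha$ chosen so that $E(\xi)=e^{-i\alpha}|E(\xi)|$, and $a_l<\xi<a_r$ denoting the consecutive zeros of $A_\alpha$ surrounding $\xi$, Corollary \ref{cor:sign_free} yields $|f(x)|\geq\norm{f/E}_\infty A_\alpha(x)$ on $[a_l,a_r]$, with $A_\alpha\geq 0$ there. The factorization $E(x)=|E(x)|e^{-i\varphi(x)/2}$ on $\mathbb{R}$ gives $A_\alpha(x)/|E(x)|=\cos(\alpha-\varphi(x)/2)$, so after dividing by $|E(x)|$ and raising to the $p$-th power,
\begin{equation*}
    \norm{f/E}_p^p \geq \norm{f/E}_\infty^p \int_{a_l}^{a_r}\cos^p(\alpha-\varphi(x)/2)\,dx.
\end{equation*}
The substitution $t=\alpha-\varphi(x)/2$ sends $[a_l,a_r]$ onto $[-\pi/2,\pi/2]$ with $dx=-2\,dt/\varphi'(x(t))$, and using $\varphi'\leq\norm{\varphi'}_\infty$ the right-hand side is bounded below by
\begin{equation*}
    \norm{f/E}_\infty^p\,\frac{2}{\norm{\varphi'}_\infty}\int_{-\pi/2}^{\pi/2}\cos^p t\,dt,
\end{equation*}
a lower bound that is independent of $\xi$.

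Taking the supremum over $f$ then gives $C(p,E)^p \leq \norm{\varphi'}_\infty/\bigl(2\int_{-\pi/2}^{\pi/2}\cos^p t\,dt\bigr)$. The Beta-function identity $\int_{-\pi/2}^{\pi/2}\cos^p t\,dt = \sqrt{\pi}\,\Gamma((p+1)/2)/\Gamma(p/2+1)$ combined with Stirling gives the asymptotic $\sqrt{2\pi/p}\,(1+O(1/p))$, from which the claimed bound follows with an implicit constant independent of $\norm{\varphi'}_\infty$. The main delicate point, as I see it, is the reduction to a real-valued extremizer attaining its maximum at a finite point; once the pointwise lower bound from the generalized Hörmander inequality is in hand, the rest is a routine change of variables governed by the phase $\varphi$ and a classical Wallis-type asymptotic, and no further structural hypotheses on $E$ are needed beyond $\varphi'\in L^\infty$.
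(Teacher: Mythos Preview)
Your proposal is correct and follows essentially the same route as the paper: reduce to a real entire function attaining its maximum at a finite point (Lemma~\ref{lemma:reduction_to_real}), apply Corollary~\ref{cor:sign_free} to get $|f|\ge \norm{f/E}_\infty A_\alpha$ on $[a_l,a_r]$, convert to an integral lower bound via the phase substitution, and finish with the Beta/Stirling asymptotics. Your change of variable $t=\alpha-\varphi(x)/2$ lands directly on $\cos^p t$ and is a slightly cleaner bookkeeping than the paper's intermediate computation via $|1+e^{-2i\alpha}\Theta_E|^p$, but the argument is the same.
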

Let us make some comments on the theorem. From our proof we obtain the following non-asymptotic bound
\begin{equation}\label{eq:non_asymptotic}
    C(p, E)^{p} \leq \norm{\varphi'}_{\infty}2^{-1}\sqrt{\frac{p+1}{2\pi}},
\end{equation}
which is worse than \eqref{eq:small_p_bound} for small values of $p$. Also note that our bound is asymptotically correct as the $p$:th root of the right hand-side tends to $1$ as $p \to \infty$. In the Paley-Wiener case, $S_{\pi}(z) = e^{-i\pi z}$, $\norm{\varphi'}_{\infty} = 2\pi$ and we recover the best known asymptotic upper-bound
\begin{equation*}
    C(p, S_{\pi})^{p} \leq \sqrt{\frac{\pi p}{2}} + O(p^{-1/2}).
\end{equation*}
Let us also remark that the method used in \cite{point_eval_in_PW} to obtain \eqref{eq:small_p_bound}, which is a refinement of the power trick method used in \cite{MR27332, MR114918, MR4417628} do not apply in the setting of de Branges spaces. Indeed, the so-called power-trick is based on the following observation. If $p = kq$ where $k$ is a positive integer, then $g(z) = f^{k}(z/k) \in PW_{\pi}^{p}$, whenever $f \in PW_{\pi}^{q}$, and $\norm{g}_{p}^{p} = k\norm{f}_{q}^{q}$. This trick does not work, in general, in de Branges spaces. Instead, we shall base our proof on Theorem \ref{thm:thm1}. This method was also used in \cite{point_eval_in_PW} to obtain \eqref{eq:large_p_bound}.

\subsubsection{Point Evaluations}

In contrast to the embedding situation point evaluations are always bounded in de Branges spaces. For $0 < p \leq \infty$, $\xi \in \mathbb{R}$, and $E \in HB$ let $C(p,E,\xi)$ denote the best constant, such that
\begin{equation*}
    \lvert f(\xi)/E(\xi) \rvert \leq C(p,E,\xi)\norm{f/E}_{p}, \; f \in \mathcal{H}^{p}(E).
\end{equation*}
A normal families argument shows that extremal functions exist for all $p$ (a function is called extremal if the above inequality is an equality). For $p \geq 1$ standard arguments show that the extremal functions are unique, up to multiplication by a constant. Most of the basic properties of the extremal functions in the Paley-Wiener case from \cite{point_eval_in_PW} carry over without extra difficulty. In particular, an orthogonality relationship between the zeros of the extremal functions, which played a crucial role in \cite{point_eval_in_PW}, is essentially the same, see Proposition \ref{prop:variational_int}. However, finer properties of the extremal functions, such as, uniform separation of zeros fails in general, even in the case $p =2$. In fact, for $p=2$ uniform separation of zeros is equivalent to boundedness of the derivative of the phase function. Therefore, it seems natural to prove separation of zeros when $\varphi' \in L^{\infty}$. We have achieved this in the case $p \geq 1$.

\begin{theorem}\label{thm:sep_zeros}
        Let $E \in HB$ have no real zeros, $p \geq 1$, $\varphi' \in L^{\infty}$, and $f = f_{p,E,\xi}$ satisfy
        \begin{equation*}
            \lvert f(\xi)/E(\xi) \rvert = C(p,E,\xi)\norm{f/E}_{p}.
        \end{equation*}
        Then the zeros of $f$ are uniformly separated.
\end{theorem}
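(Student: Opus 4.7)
The plan is to apply Corollary~\ref{cor:sign_free} at an appropriate local maximum of $|f|/|E|$ inside each gap between consecutive zeros of $f$, deducing the quantitative separation $t_{n+1}-t_{n} \geq 2\pi/\norm{\varphi'}_{\infty}$ for every pair of consecutive real zeros. The bound $2\pi/\norm{\varphi'}_{\infty}$ emerges from the identity $A_{\alpha}(x) = \lvert E(x) \rvert \cos(\alpha-\varphi(x)/2)$: consecutive real zeros of $A_{\alpha}$ occur where $\alpha-\varphi(x)/2$ traverses a half-period of cosine, so $\varphi$ must increase by $2\pi$ between them, and the mean value theorem then forces a gap of at least $2\pi/\norm{\varphi'}_{\infty}$.

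As a preliminary step I would verify that the zeros of the extremal function $f$ are all real and simple. For $p \geq 1$ this is a standard feature of $L^{p}$-extremal entire functions: using the variational relation in Proposition~\ref{prop:variational_int}, any complex conjugate pair of zeros or multiple real zero can be perturbed in a way that preserves membership in $\mathcal{H}^{p}(E)$ but strictly improves the ratio $|f(\xi)/E(\xi)|/\norm{f/E}_{p}$, contradicting extremality. Fix then consecutive real zeros $t_{n} < t_{n+1}$ of $f$ and, after a sign change, assume $f > 0$ on $(t_{n}, t_{n+1})$. Let $\eta \in (t_{n}, t_{n+1})$ maximize $f/|E|$ on this interval and choose $\alpha \in \mathbb{R}$ so that $E(\eta) = e^{-i\alpha}|E(\eta)|$. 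Write $a_{l} < \eta < a_{r}$ for the zeros of $A_{\alpha}$ immediately surrounding $\eta$. It is enough to show that $f$ does not vanish on $(a_{l}, a_{r})$, for then $t_{n+1} - t_{n} \geq a_{r} - a_{l} \geq 2\pi/\norm{\varphi'}_{\infty}$.

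The nonvanishing is the core step. Ideally one would invoke Corollary~\ref{cor:sign_free} directly at $\eta$, but that corollary requires $|f(\eta)/E(\eta)| = \norm{f/E}_{\infty}$, whereas $\eta$ is only a local maximum. To bridge this I would work with the auxiliary function $F(x) = f(x) - M_{n} A_{\alpha}(x)$, where $M_{n} := f(\eta)/|E(\eta)|$: it lies in $\mathcal{H}^{\infty}(E)$ and vanishes to order at least two at $\eta$, because $f/|E|$ and $A_{\alpha}/|E|$ both attain local maxima of value $M_{n}/|E(\eta)|\cdot|E(\eta)|$ and $1\cdot|E(\eta)|$ there, respectively. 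I would then extract additional vanishing of $F$ from the orthogonality encoded in Proposition~\ref{prop:variational_int}, pairing $F$ against suitable test elements of $\mathcal{H}^{p}(E)$ (the availability of such test functions is precisely where the hypothesis $p\geq 1$ enters), and run the Hermite--Biehler zero-counting argument underlying Theorem~\ref{thm:thm1} to force $F \geq 0$ on $[a_{l}, a_{r}]$. This gives $f \geq M_{n} A_{\alpha} > 0$ on the open interval, as required.

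I expect the principal obstacle to be exactly this reduction from a local maximum to an effective global one: Corollary~\ref{cor:sign_free} is a global statement about $\norm{f/E}_{\infty}$, and transporting it to the local information at $\eta$ must be done through the variational/convex-duality content of Proposition~\ref{prop:variational_int}, which in turn is unavailable for $p<1$. Once the local argument is in place, uniformity of the separation constant as $n \to \pm\infty$ follows automatically from $\norm{\varphi'}_{\infty}<\infty$ together with the decay $|f(x)/E(x)| \to 0$ furnished by Theorem~\ref{thm:dyakonov}, completing the proof.
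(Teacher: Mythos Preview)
Your proposal identifies the right difficulty but does not actually overcome it. The entire argument hinges on the claim that $F=f-M_{n}A_{\alpha}\geq 0$ on $[a_{l},a_{r}]$, where $M_{n}$ is only the \emph{local} maximum of $f/|E|$ on $(t_{n},t_{n+1})$. You say you would ``extract additional vanishing of $F$ from the orthogonality encoded in Proposition~\ref{prop:variational_int}'' and then ``run the Hermite--Biehler zero-counting argument.'' But the zero-counting in the proof of Theorem~\ref{thm:thm1} rests on Lemma~\ref{lemma:main_lemma}, i.e.\ on $f-\lambda E\in\overline{HB}$, and that lemma genuinely needs $|\lambda|\geq\norm{f/E}_{\infty}$. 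With $|\lambda|=M_{n}<\norm{f/E}_{\infty}$ there is no reason for $f-M_{n}e^{i\alpha}E$ to lie in $\overline{HB}$, so the interlacing machinery does not start. Moreover, Proposition~\ref{prop:variational_int} produces integral identities for $|f|^{p}$ against rational weights, not pointwise zeros of $F$; it is unclear what ``pairing $F$ against test elements'' would even mean here. As written, the core step is a hope rather than an argument.

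The paper's proof is organised quite differently and sidesteps the local-versus-global issue altogether. Instead of trying to apply Corollary~\ref{cor:sign_free} to $f$ at a local maximum, it passes to the auxiliary function
\[
\psi_{n}(z)=\frac{z^{2}f(z)}{(z-\lambda_{n})(\lambda_{n+1}-z)}\in\mathcal{H}^{p}(E)\subset\mathcal{H}^{\infty}(E),
\]
which kills the two zeros under consideration. Then Corollary~\ref{cor:sign_free} is applied at the \emph{global} maximum of $\psi_{n}/E$, where it is legitimate. The orthogonality relation of Proposition~\ref{prop:variational_int} enters not to force vanishing of some $F$, but as an integral identity equating the mass of $|f|^{p}/|E|^{p}$ (weighted by $x^{2}/|(x-\lambda_{n})(x-\lambda_{n+1})|$) over $I_{n}=[\lambda_{n},\lambda_{n+1}]$ with the mass over its complement. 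The left side is bounded above by $\norm{\psi_{n}/E}_{\infty}^{p}$ times an explicit integral over $I_{n}$; the right side is bounded below, via Corollary~\ref{cor:sign_free} and Lemma~\ref{lemma:sep_from_0}, by $\norm{\psi_{n}/E}_{\infty}^{p}$ times a quantity involving a fixed-length interval $J_{\delta}^{n}$ at uniform distance from $I_{n}$. Cancelling $\norm{\psi_{n}/E}_{\infty}^{p}$ and comparing the two integrals forces $|I_{n}|$ to be bounded below. The restriction $p\geq 1$ appears because the exponent $1-p$ in these integrals must be nonpositive for the estimates (in particular \eqref{eq:pf_sep_0_right_est}) to go the right way, not because of any duality or test-function argument.
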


This generalizes Theorem 1.4. in \cite{point_eval_in_PW}, although there it was proven in the larger range $p \geq 1/2$. Our proof makes essential use of Theorem \ref{thm:thm1}.

\subsection*{Notation}
    Let $0 < p \leq \infty$. In this article $\norm{\cdot}_{p}$ always denotes the $L^{p}$ norm on the real-axis. We denote by $H^{p}$ the usual holomorphic Hardy space of the upper half-plane. We identify functions in $H^{p}$ with their non-tangential boundary values on $\mathbb{R}$. For an entire function $f$ we define $f^{\#}(z) = \overline{f(\overline{z})}$.

\subsection*{Acknowledgment}

The author wishes to thank Joaquim Ortega Cerdà and Kristian Seip for helpful comments and suggestions for further research.

\section{Proof of Theorem \ref{thm:thm1}}

We denote by $\overline{HB}$ the class of entire function satisfying $\lvert E(\overline{z}) \rvert \leq \lvert E(z) \rvert$, $z \in \mathbb{C}_{+}$. Thus the difference between $HB$ and $\overline{HB}$ is that in the latter we allow the degenerate case when $E$ is a scalar multiple of a real entire function. This will make the statements of certain results simpler. We begin with a variant of the well-known Hermite-Biehler Theorem. We shall only need a simple part of the full Theorem and therefore we include a proof for completeness.

\begin{theorem}\label{thm:HB}
    Let $E \in HB$, $E = A + i B$. Then there exists a real entire function $S$ with only real zeros, such that $A = SA_{0}$, $B = SB_{0}$. The real zeros of $A_{0}$ and $B_{0}$ are simple and interlacing. Moreover,
    \begin{equation}\label{eq:derivative}
        A'(x)B(x) - A(x)B'(x) \geq 0, \; x \in \mathbb{R},
    \end{equation}
    with strict inequality at all points, $x$, such that $E(x) \neq 0$.
    \begin{proof}
        It is clear that we may define a real entire function $S$ as in the statement of the Theorem. It remains to prove that the real zeros of $A_{0}$ and $B_{0}$ are simple, interlacing, and equation \eqref{eq:derivative} holds. We begin by proving the identity in equation \eqref{eq:derivative}. The function $\Theta_{E} = E^{\#}/E$ is bounded by $1$ in the upper half-plane. Thus
        \begin{equation*}
            z \mapsto i \frac{1+\Theta_{E}}{1-\Theta_{E}},
        \end{equation*}
        maps $\mathbb{C}_{+}$ to itself and also it is real on the real line, except on the exceptional set $\left\{ \Theta_{E}(x) = 1 \right\}$. Basic geometric considerations show that the normal derivative of the imaginary part is nonpositive and hence the $y$-derivative of the imaginary part of the function is nonnegative on the real-axis. By the Cauchy-Riemann equations it follows that the $x$-derivative of the real part is nonpositive on the real-axis. A computation gives
        \begin{equation*}
            i \frac{1+\Theta_{E}}{1-\Theta_{E}} = A/B.
        \end{equation*}
        Thus we obtain
        \begin{equation}
            0 \leq \left(\frac{A(x)}{B(x)}\right)'= \frac{B(x)A'(x)-A(x)B'(x)}{B(x)^{2}}, \; x \in \mathbb{R}.
        \end{equation}
        In fact, since the imaginary part is a positive harmonic function, which is zero on the real line, it follows from the usual Hopf lemma for elliptic PDE that we have strict inequality, except possibly on the exceptional set $\left\{ \Theta_{E}(x) = 1 \right\} = \left\{ B(x) = 0 \right\}$. For a proof of the Hopf lemma see, for example, Chapter 6 in \cite{MR2597943} or Chapter 2 in \cite{MR2450237} for a proof applicable to harmonic functions. Analogous considerations for $-B/A$ give that the inequality is strict except possibly on the exceptional set $\left\{ A(x) = 0 \right\}$. Thus whenever $E(x) \neq 0$ we have the strict inequality
        \begin{equation}\label{eq:interlacing_ineq}
            B(x)A'(x)-A(x)B'(x) > 0.
        \end{equation}
        It remains to prove the interlacing property. We shall deduce it from equation \eqref{eq:derivative}. By replacing $E$ by $E/S$ if necessary we may suppose $A$ and $B$ have no common zeros. Suppose $A$ has a zero at $x_{0}$ of order larger than or equal to $2$, then we have equality in \eqref{eq:derivative} and hence $B(x_{0}) = 0$, which is impossible. Thus $A$ has only simple zeros. Analogous considerations give that $B$ has only simple zeros as well. Now let $x_{0}$ be a real simple zero of $A$, then
        \begin{equation*}
            A'(x_{0})B(x_{0}) > 0.
        \end{equation*}
        So $A'(x_{0})$ and $B(x_{0})$ have the same sign. Suppose $A$ has another simple zero to the right of $x_{0}$ before $B$ has a zero to the right of $x_{0}$. Then $B$ has the same sign, but $A'$ has the opposite sign, contradicting \eqref{eq:interlacing_ineq}. The same argument applies to the left and to zeros of $B$ so the result is proved.
    \end{proof}
\end{theorem}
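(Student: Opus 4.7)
My plan is to first peel off the common real zeros of $A$ and $B$ to construct the factor $S$, then establish the derivative inequality as a manifestation of the fact that $A/B$ is a Herglotz function, and finally deduce simplicity and interlacing of zeros from that inequality.

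For the first step, every real zero of $E$ is simultaneously a zero of both $A$ and $B$ of the same multiplicity (and conversely), so collecting those zeros with multiplicity yields a real entire $S$ with $A = SA_0$ and $B = SB_0$, where $A_0$ and $B_0$ have no common real zero. Since $E/S$ remains Hermite--Biehler, I may assume throughout that $A$ and $B$ share no real zero; the factor $S$ plays no further role in the argument.

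The analytic heart of the argument is the observation that $|\Theta_E| < 1$ on $\mathbb{C}_+$ (where $\Theta_E = E^{\#}/E$), combined with the identity $i(1+\Theta_E)/(1-\Theta_E) = A/B$, which shows that $A/B$ maps $\mathbb{C}_+$ into itself, i.e.\ is a Herglotz function. Its imaginary part is a positive harmonic function on $\mathbb{C}_+$ vanishing on $\mathbb{R}\setminus\{B=0\}$; the Hopf boundary point lemma then forces strict positivity of the inward normal derivative there, equivalently $(A/B)'>0$ on $\mathbb{R}\setminus\{B=0\}$ by the Cauchy--Riemann equations, which rearranges to the strict inequality $A'B - AB' > 0$ off $\{B=0\}$. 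Applying the same reasoning to $-B/A$ (which is also Herglotz, being the negative reciprocal of $A/B$) covers the strict inequality off $\{A=0\}$, so together one gets strict positivity wherever $E\neq 0$ and the weak inequality everywhere by continuity.

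Simplicity and interlacing then drop out algebraically from the derivative inequality. A double real zero of $A$ would kill both $A$ and $A'$, hence kill $A'B - AB'$, forcing $E$ to vanish there---impossible under the reduction. The same for $B$. Once $A$ and $B$ each have only simple real zeros, at any simple real zero $x_0$ of $A$ the strict inequality gives $A'(x_0)B(x_0) > 0$; if the next zero of $A$ to the right of $x_0$ is reached before any zero of $B$, then $B$ keeps its sign on the interval but $A'$ necessarily flips sign between two consecutive simple zeros of $A$, contradicting the positivity at one endpoint. The symmetric argument for zeros of $B$ produces full interlacing. The step I expect to require the most care is the strict positivity of $(A/B)'$ on the real axis, where I would invoke the Hopf boundary lemma for positive harmonic functions rather than trying to massage it out of the Nevanlinna representation directly.
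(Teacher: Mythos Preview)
Your proposal is correct and follows essentially the same route as the paper: both arguments factor out the common real zeros into $S$, identify $i(1+\Theta_E)/(1-\Theta_E)=A/B$ as a self-map of $\mathbb{C}_+$, invoke the Hopf boundary lemma (together with the companion function $-B/A$) to obtain the strict inequality $A'B-AB'>0$ wherever $E\neq 0$, and then read off simplicity and interlacing from that inequality exactly as you describe. The only minor imprecision is your claim that a real zero of $E$ forces $A$ and $B$ to vanish there to the \emph{same} order; in fact one of them may vanish to higher order, but taking $S$ to carry the multiplicity of $E$ (equivalently, the minimum of the two) already makes $A_0,B_0$ coprime at that point, so your construction goes through unchanged.
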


Our main technical tool is the following Lemma.

\begin{lemma}\label{lemma:main_lemma}
    Let $E$ and $f$ be entire functions. Then
    \begin{equation*}
        f(z)-\lambda E(z) \in \overline{HB},
    \end{equation*}
    for all complex $\lvert \lambda \rvert \geq 1$, if $E \in \overline{HB}$, $f \in \mathcal{H}^{\infty}(E)$, and $\norm{f/E}_{\infty} \leq 1$.
    \begin{proof}
        Let $E \in \overline{HB}$, $f \in \mathcal{H}^{\infty}(E)$ and $\norm{f/E}_{\infty} \leq 1$. Let $\lambda \in \mathbb{C}$ and $\lvert \lambda \rvert \geq 1$. We must show that
        \begin{equation*}
            f(z)-\lambda E(z) \in \overline{HB}.
        \end{equation*}
        Assume that $\lvert \lambda \rvert > 1$. Then for $z \in \mathbb{C}_{+}$ we have by the $HB$ property of $E$ and that $\norm{f/E}_{\infty} \leq 1$
        \begin{equation*}
            \left\lvert \frac{f^{\#}(z)-\overline{\lambda}E^{\#}(z)}{f(z)- \lambda E(z)} \right\rvert \leq \left\lvert \frac{f^{\#}(z)/E(z)-\overline{\lambda}E^{\#}(z)/E(z)}{f(z)/E(z)- \lambda} \right\rvert \leq \frac{1 + \lvert \lambda \rvert}{\lvert \lambda\rvert-1}.
        \end{equation*}
        Thus the holomorphic function inside the absolute value sign on the left is bounded in the upper half-plane and hence it achieves its supremum norm on the real axis. Since it is bounded by $1$ on the real-axis it follows that $f-\lambda E \in \overline{HB}$ in this case. For $\lvert \lambda \rvert = 1$ the result follows by taking a limit.

    \end{proof}
\end{lemma}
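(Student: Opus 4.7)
The defining property of $\overline{HB}$ for $G(z) = f(z) - \lambda E(z)$ is $\lvert G^{\#}(z) \rvert \leq \lvert G(z) \rvert$ for $z \in \mathbb{C}_{+}$, so my plan is to show that the meromorphic function
\begin{equation*}
    \phi_{\lambda}(z) = \frac{f^{\#}(z) - \overline{\lambda}\, E^{\#}(z)}{f(z) - \lambda E(z)}
\end{equation*}
satisfies $\lvert \phi_{\lambda}(z) \rvert \leq 1$ throughout the upper half-plane.

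I would first treat the strict case $\lvert \lambda \rvert > 1$. Dividing numerator and denominator by $E(z)$ and using that $\lvert f(z)/E(z) \rvert \leq 1$ in $\mathbb{C}_{+}$ (from the boundary bound $\norm{f/E}_{\infty} \leq 1$ together with the Hardy-space maximum principle), that $\lvert f^{\#}(z)/E(z) \rvert \leq 1$ by the same argument since $\lvert f^{\#}(x) \rvert = \lvert f(x) \rvert$ on $\mathbb{R}$, and that $\lvert E^{\#}(z)/E(z) \rvert \leq 1$ by $E \in \overline{HB}$, the denominator of $\phi_{\lambda}$ is bounded below in modulus by $\lvert \lambda \rvert - 1 > 0$. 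Hence $\phi_{\lambda}$ is holomorphic in $\mathbb{C}_{+}$ with the crude bound $\lvert \phi_{\lambda} \rvert \leq (1 + \lvert \lambda \rvert)/(\lvert \lambda \rvert - 1)$, so in particular $\phi_{\lambda} \in H^{\infty}(\mathbb{C}_{+})$.

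The crucial observation is that on the real axis $f^{\#}(x) = \overline{f(x)}$ and $E^{\#}(x) = \overline{E(x)}$, so the numerator of $\phi_{\lambda}(x)$ is the complex conjugate of the denominator. Therefore $\lvert \phi_{\lambda}(x) \rvert = 1$ wherever $f(x) - \lambda E(x) \neq 0$, which by the same lower bound as above holds at every point with $E(x) \neq 0$. Since $\phi_{\lambda}$ is bounded and holomorphic in $\mathbb{C}_{+}$ with boundary values of modulus $1$ almost everywhere, the maximum modulus principle for $H^{\infty}(\mathbb{C}_{+})$ yields $\lvert \phi_{\lambda} \rvert \leq 1$ in $\mathbb{C}_{+}$, which is exactly $f - \lambda E \in \overline{HB}$.

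For the endpoint $\lvert \lambda \rvert = 1$ I would take a sequence $\lambda_{n} \to \lambda$ with $\lvert \lambda_{n} \rvert > 1$ and pass to the limit in the pointwise bound $\lvert f(\overline{z}) - \lambda_{n} E(\overline{z}) \rvert \leq \lvert f(z) - \lambda_{n} E(z) \rvert$ to obtain the same inequality for $\lambda$. The only real obstacle is that for $\lvert \lambda \rvert = 1$ the denominator $f - \lambda E$ can vanish in $\mathbb{C}_{+}$ so the direct maximum-modulus argument is no longer available; the limiting argument neatly bypasses this. I expect the main non-obvious step is to notice the unimodular-boundary-values identity in paragraph three, which is what reduces the problem to a one-line application of the Hardy-space maximum principle.
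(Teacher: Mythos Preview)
Your proof is correct and follows essentially the same approach as the paper's: both establish the crude bound $\lvert \phi_{\lambda} \rvert \leq (1+\lvert\lambda\rvert)/(\lvert\lambda\rvert-1)$ for $\lvert\lambda\rvert>1$ to get $\phi_{\lambda}\in H^{\infty}(\mathbb{C}_{+})$, then use that the boundary values are unimodular to deduce $\lvert\phi_{\lambda}\rvert\leq 1$, and handle $\lvert\lambda\rvert=1$ by a limit. Your write-up is in fact a bit more explicit than the paper's (you spell out why $\lvert f/E\rvert$, $\lvert f^{\#}/E\rvert$, $\lvert E^{\#}/E\rvert\leq 1$ in $\mathbb{C}_{+}$ and why the boundary values have modulus $1$), but the argument is the same.
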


\begin{remark}
    In fact, the converse of the previous lemma is also true, but we shall not need it.
\end{remark}

For the proof of Theorem \ref{thm:thm1} we may assume $\norm{f/E}_{\infty} \leq 1$. Let $\varphi_{\alpha}$ be the function
\begin{equation*}
    \varphi_{\alpha}(z) = f(z) - e^{i\alpha}E(z).
\end{equation*}
It follows from Lemma \ref{lemma:main_lemma} that $\varphi_{\alpha} \in \overline{HB}$. If we are in the degenerate case where $\varphi_{\alpha}$ is a scalar multiple of a real entire function for some $\alpha$ then the theorem is straightforward and hence we assume $\varphi_{\alpha} \in HB$ from now on. Write
\begin{equation*}
    \varphi_{\alpha}(z) = \Omega_{\alpha}(z) + i \Gamma_{\alpha}(z),    
\end{equation*}
where $\Omega_{\alpha}$ and $\Gamma_{\alpha}$ are the real and imaginary parts of $\varphi_{\alpha}$ respectively. Since $f$ is real entire we have
\begin{equation*}
    \Omega_{\alpha}(z) = \frac{1}{2} \left(f(z)-e^{i\alpha}E(z) + \left(f(z)-e^{i\alpha}E(z)\right)^{\#}\right) = f(z)-A_{\alpha}(z), 
\end{equation*}
and
\begin{equation*}
    \Gamma_{\alpha}(z) = \frac{1}{2i} \left(f(z)-e^{i\alpha}E(z) - \left(f(z)-e^{i\alpha}E(z)\right)^{\#}\right) = -B_{\alpha}(z), 
\end{equation*}
Suppose $\xi \in \mathbb{R}$ is such that $f(\xi) = e^{i\alpha}E(\xi)$. Our goal is to show that $\Omega_{\alpha}(x) \geq 0$ on the interval $b_{l} \leq \xi \leq b_{r}$, where $b_{l}$ and $b_{r}$ are the simple zeros of $B_{\alpha}$ to the left and right of $\xi$ respectively. We will do this in two steps. First we prove that the positivity propagates to the whole desired interval. That is, if $\Omega_{\alpha}(x) > 0$ for some $x \in (b_{l}, b_{r})$ then necessarily $\Omega_{\alpha}(x) \geq 0$ for all $x \in [b_{l}, b_{r}]$. Then we show that $\Omega_{\alpha}(x) > 0$ in some punctured neighbourhood of $\xi$. 

\subsection{Propagation of Positivity}

Suppose $\xi \in \mathbb{R}$ is such that $f(\xi) = e^{i\alpha}E(\xi)$. Then $\varphi_{\alpha}(\xi)=0$ and hence both $\Omega_{\alpha}(\xi) = 0$ and $\Gamma_{\alpha}(\xi) = 0$. Also since both $f/\lvert E_{\alpha} \rvert$ and $A_{\alpha}/\lvert E_{\alpha} \rvert$ have extreme points at $\xi$ it will follow that $\Omega_{\alpha}$ has a zero of order at least $2$ at $\xi$. We show that $\varphi_{\alpha}$ has only a simple zero at $\xi$ and hence one of the zeros of $\Omega_{\alpha}$ at $\xi$ is a simple interlacing zero.

\begin{lemma}\label{lemma:simple_zero}
    The real zeros of $\varphi_{\alpha}$ are all simple.
    \begin{proof}
        Suppose, seeking a contradiction, that $\varphi_{\alpha}$ has a zero of order $p \geq 2$ at $\eta \in \mathbb{R}$. Then
        \begin{equation*}
            (z-\eta)^{p}h(z) = f(z)-e^{i\alpha}E(z), \; \; h(\eta) \neq 0.
        \end{equation*}
        Since $\eta$ is real $E(\eta) \neq 0$. Write $h(\eta)/E(\eta) = e^{i\beta} \lvert h(\eta)/E(\eta) \rvert$. Choose a sequence $\left\{ z_{n} \right\} \subset \mathbb{C}_{+}$, $z_{n} \to \eta$, such that
        \begin{equation*}
            (z_{n}-\eta)^{p} = e^{i\alpha}e^{-i\beta}\lvert z_{n} - \eta \rvert^{p}.
        \end{equation*}
        This is possible since $p \geq 2$. Then $h(z_{n})/E(z_{n}) = e^{i(\beta + \delta_{n})}\lvert h(z_{n})/E(z_{n}) \rvert$, with $\delta_{n} \to 0$.
        \begin{equation*}
        \begin{split}
            \left\lvert \frac{f(z_{n})}{E(z_{n})} \right\rvert^{2} = \left\lvert(z_{n}-\eta)^{p}\frac{h(z_{n})}{E(z_{n})}+e^{i\alpha} \right\rvert^{2} \\ = 1 + \lvert z_{n} - \eta \rvert^{2p}\left\lvert \frac{h(z_{n})}{E(z_{n})} \right\rvert^{2} + 2 \lvert z_{n} - \eta \rvert^{p} \left\lvert \frac{h(z_{n})}{E(z_{n})} \right\rvert \cos(\delta_{n}) > 1,
        \end{split}
        \end{equation*}
        for $\delta_{n}$ sufficiently small. This is a contradiction, hence $\eta$ is a zero of order $1$.
    \end{proof}
\end{lemma}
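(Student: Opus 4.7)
My plan is to argue by contradiction. Suppose $\varphi_{\alpha}$ has a real zero $\eta$ of order $p \geq 2$. The goal is to produce $z \in \mathbb{C}_{+}$ close to $\eta$ with $\lvert f(z)/E(z) \rvert > 1$, which contradicts the bound $\lvert f/E \rvert \leq 1$ in $\mathbb{C}_{+}$. The latter holds because $f/E \in H^{\infty}(\mathbb{C}_{+})$ and $\norm{f/E}_{\infty} \leq 1$ on $\mathbb{R}$, so the maximum principle for Hardy-space functions extends the bound to the upper half-plane.

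To set this up, I would factor $\varphi_{\alpha}(z) = (z-\eta)^{p}h(z)$ with $h$ entire and $h(\eta) \neq 0$. Since $E$ has no real zeros, $E(\eta) \neq 0$, so $\omega := h(\eta)/E(\eta)$ is a nonzero complex number, which I write as $\omega = \lvert \omega \rvert e^{i\beta}$. Rearranging the definition of $\varphi_{\alpha}$ yields
\begin{equation*}
    \frac{f(z)}{E(z)} = e^{i\alpha} + (z-\eta)^{p}\frac{h(z)}{E(z)},
\end{equation*}
and for $z = \eta + re^{i\theta}$ with $r$ small the perturbation is approximately $r^{p}e^{ip\theta}\omega$.

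The key observation is that because $p \geq 2$, the map $\theta \mapsto p\theta$ sweeps through an interval of length at least $2\pi$ as $\theta$ ranges over the open interval $(0,\pi)$ corresponding to $z \in \mathbb{C}_{+}$. Consequently, I can choose $\theta^{*} \in (0,\pi)$ with $e^{ip\theta^{*}}\omega$ equal to a positive real multiple of $e^{i\alpha}$, so that the perturbation adds constructively to the leading term. For such $z = \eta + re^{i\theta^{*}}$ and sufficiently small $r > 0$, a direct expansion gives
\begin{equation*}
    \left\lvert \frac{f(z)}{E(z)} \right\rvert^{2} = 1 + 2r^{p}\lvert \omega \rvert + O(r^{p+1}) > 1,
\end{equation*}
which is the contradiction we sought.

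I do not expect a serious obstacle in this argument; the only delicate point is the angular constraint $\theta^{*} \in (0,\pi)$, which is exactly what the hypothesis $p \geq 2$ accommodates. For a simple real zero the rotation $e^{ip\theta}$ would only sweep $(0,\pi)$ and could fail to hit the desired direction, consistent with the fact that simple real zeros are permitted.
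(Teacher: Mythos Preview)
Your proposal is correct and follows essentially the same approach as the paper: both factor out the putative higher-order zero, use $p\geq 2$ to select a direction $\theta^{*}\in(0,\pi)$ (equivalently, a sequence $z_n\to\eta$ in $\mathbb{C}_{+}$) along which the perturbation $(z-\eta)^{p}h(z)/E(z)$ has the same phase as $e^{i\alpha}$, and then expand $\lvert f/E\rvert^{2}$ to obtain a value strictly larger than $1$, contradicting the bound in the upper half-plane. Your treatment of the error via $O(r^{p+1})$ is the asymptotic version of the paper's explicit $\cos(\delta_{n})$ computation, and your remark about why $p=1$ escapes the argument matches the paper's observation that $p\geq 2$ is what makes the angular choice possible.
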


In particular, this means that the zero of $\varphi_{\alpha}$ at $\xi$ is order $1$.

\begin{lemma}\label{lemma:order_of_zero}
    The zero of $\Omega_{\alpha}$ at any real zero of $\varphi_{\alpha}$ is of order $2$.
    \begin{proof}
        Let $\eta \in \mathbb{R}$ be a zero of $\varphi_{\alpha}$. First we note that it is of most order $2$ since if it were of higher order then $\varphi_{\alpha}$ would have a zero of order $\geq 2$, which is impossible by the previous Lemma. It remains to prove that $\Omega_{\alpha}'(\eta) = 0$. By definition
        \begin{equation*}
            \Omega_{\alpha}'(\eta) = f'(\eta) - A_{\alpha}'(\eta).
        \end{equation*}
        Thus to prove the Lemma it will suffice to show that $f'(\eta) = A_{\alpha}'(\eta)$. Since the smooth functions $f(x)/\lvert E(x) \rvert$ $A_{\alpha}(x)/\lvert E(x) \rvert$ have extreme points at $\eta$ it follows by differentiating them that
        \begin{equation*}
            \frac{f'(\eta)}{f(\eta)} = \frac{\partial_{x} \lvert E \rvert (\eta)}{\lvert E(\eta) \rvert} \; \text{ and }\; \frac{A_{\alpha}'(\eta)}{A_{\alpha}(\eta)} = \frac{\partial_{x} \lvert E \rvert (\eta)}{\lvert E(\eta) \rvert}.
        \end{equation*}
        Since $f(\eta) = A_{\alpha}(\eta) = e^{i\alpha}E(\eta)$ it follows that $f'(\eta) = A_{\alpha}'(\eta)$ as required.
    \end{proof}
\end{lemma}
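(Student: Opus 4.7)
The plan is to show that $\Omega_\alpha'(\eta) = 0$, which together with $\Omega_\alpha(\eta) = 0$ produces a zero of order at least two, and then to upgrade the lower bound to equality. First I would observe that any real zero $\eta$ of $\varphi_\alpha$ satisfies $f(\eta) = e^{i\alpha} E(\eta)$. Since $f$ is real-valued, the right-hand side must be real; this forces $B_\alpha(\eta) = 0$ and $f(\eta) = A_\alpha(\eta)$, with common modulus $|E(\eta)|$. In particular $|f(\eta)/E(\eta)| = 1$, so under the normalization $\norm{f/E}_\infty \leq 1$ the function $|f/E|$ attains its supremum at $\eta$; similarly $|A_\alpha/E|$ attains its maximum $1$ at $\eta$ because $|A_\alpha| \leq |E_\alpha| = |E|$ on $\mathbb{R}$.

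Next I would exploit these extrema by differentiation. The smooth real-valued functions $x \mapsto f(x)^2/|E(x)|^2$ and $x \mapsto A_\alpha(x)^2/|E(x)|^2$ each have a maximum at $\eta$, and $E$ has no real zeros by hypothesis. Setting their derivatives to zero at $\eta$ and using $f(\eta) = A_\alpha(\eta) \neq 0$ yields
\[
\frac{f'(\eta)}{f(\eta)} \;=\; \frac{|E|'(\eta)}{|E(\eta)|} \;=\; \frac{A_\alpha'(\eta)}{A_\alpha(\eta)},
\]
from which $f'(\eta) = A_\alpha'(\eta)$, i.e.\ $\Omega_\alpha'(\eta) = 0$.

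To see that the order does not exceed two, I would apply Theorem \ref{thm:HB} to $\varphi_\alpha \in HB$, factoring $\Omega_\alpha = S \Omega_0$ and $\Gamma_\alpha = S \Gamma_0$ with $\Omega_0, \Gamma_0$ sharing no real zeros and each having only simple real zeros. Since $E$ has no real zeros, the same theorem applied to $E_\alpha$ gives that $B_\alpha$ has only simple real zeros, so $\Gamma_\alpha = -B_\alpha$ vanishes to order one at $\eta$. The factorization then forces $S$ to have a simple zero at $\eta$ and $\Gamma_0(\eta) \neq 0$; combined with the already established $\mathrm{ord}_\eta \Omega_\alpha \geq 2$ this yields $\mathrm{ord}_\eta \Omega_0 = 1$ and hence $\mathrm{ord}_\eta \Omega_\alpha = 2$. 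The main obstacle is really the first step, namely extracting $f'(\eta) = A_\alpha'(\eta)$ from the extremal property at $\eta$; once this is secured, the order count via the Hermite--Biehler factorization is routine.
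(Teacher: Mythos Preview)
Your argument is correct and the central step---obtaining $f'(\eta)=A_\alpha'(\eta)$ by differentiating $f/|E|$ and $A_\alpha/|E|$ at their common maximum $\eta$---is exactly the paper's argument.

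The only real difference is in how you cap the order at $2$. The paper appeals to the preceding Lemma~\ref{lemma:simple_zero} (real zeros of $\varphi_\alpha$ are simple) and then reads off, via the Hermite--Biehler structure of $\varphi_\alpha$, that the remaining zero of $\Omega_\alpha$ is a single interlacing one. You instead observe directly that $\Gamma_\alpha=-B_\alpha$ has only simple real zeros (because $E_\alpha$ has no real zeros, so Theorem~\ref{thm:HB} applies with trivial common factor), and then the factorisation $\Omega_\alpha=S\Omega_0$, $\Gamma_\alpha=S\Gamma_0$ forces $\mathrm{ord}_\eta S=1$ and $\mathrm{ord}_\eta\Omega_0\le 1$. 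This bypasses Lemma~\ref{lemma:simple_zero} entirely and in fact recovers it as a by-product ($\varphi_\alpha=S(\Omega_0+i\Gamma_0)$ with $\Gamma_0(\eta)\neq 0$). So your route is marginally more self-contained, while the paper's is shorter because the simplicity of $\varphi_\alpha$'s real zeros has already been established.
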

Thus $\Omega_{\alpha}$ cannot have any simple zeros in the interval $[b_{l}, b_{r}]$, where $b_{l}$ and $b_{r}$ are the simple zeros of $\Gamma_{\alpha} = -B_{\alpha}$ to the right and left of $\xi$ respectively. Since any zero of $\Omega_{\alpha}$ in $[b_{l}, b_{r}]$ is a zero of order $2$ the sign of $\Omega_{\alpha}$ is constant in $[b_{l}, b_{r}]$. Thus Theorem \ref{thm:thm1} will follow if we show that
\begin{equation*}
    \Omega_{\alpha}(x) > 0, \; \text{ for some } b_{l} < x <  b_{r}.
\end{equation*}

We do this in the next section.

\subsection{Positivity in a Small Neighbourhood of $\xi$}

We now show that $\Omega_{\alpha}(x) > 0$ in some punctured neighbourhood of $\xi$. We isolate this fact as a Lemma.

\begin{lemma}\label{lemma:positivity}
    There exists a punctured neighbourhood of $\xi$, such that $\Omega_{\alpha}(x) > 0$.
    \begin{proof}
        By Lemma \ref{lemma:simple_zero} $\varphi_{\alpha}$ has a simple zero at $\xi$. By Lemma \ref{lemma:order_of_zero} $\Omega_{\alpha}$ has a double zero at $\xi$, hence we find that
        \begin{equation*}
            i\Gamma_{\alpha}'(\xi) = \varphi_{\alpha}'(\xi) = e^{i\gamma}\lvert \varphi'(\xi) \rvert.
        \end{equation*}
        Thus either $\gamma = \pi/2$ or $\gamma = -\pi/2$ $\mod 2\pi$. We claim that we have the positive sign. Indeed, write
        \begin{equation*}
            \frac{f(z)}{E(z)} = e^{i\alpha} + (z-\xi)\frac{\varphi_{\alpha}(z)}{(z-\xi)E(z)},
        \end{equation*}
        where $\varphi_{\alpha}'(\xi) \neq 0$. Let $\varphi'_{\alpha}(\xi)= e^{i\gamma} \lvert \varphi'_{\alpha}(\xi) \rvert$. Recalling that $E(\xi) = e^{-i\alpha}\lvert E(\xi) \rvert$ we obtain
        \begin{equation*}
            \frac{f(z)}{E(z)} = e^{i\alpha}\left(1 + (z-\xi)e^{i\gamma}e^{i\delta(z)}\frac{\lvert \varphi_{\alpha}(z) \rvert}{\lvert (z-\xi) \rvert \lvert E(z) \rvert} \right),
        \end{equation*}
        with $\delta(z)$ real and tending to $0$ as $z \to \xi$. As in the proof of Lemma \ref{lemma:simple_zero} we see that
        \begin{equation*}
            0 < \gamma < \pi \mod{2\pi},
        \end{equation*}
        since otherwise we would obtain $\lvert f(z) / E(z) \rvert > 1$ for some $z$ in the upper half-plane. Thus indeed, $e^{i\gamma} = i$ as claimed. Since $\psi_{\alpha}(z) = (z-\xi)^{-1}\varphi_{\alpha}(z) \in HB$ and $\psi_{\alpha}(\xi) \neq 0$ we obtain from Theorem \ref{thm:HB}
        \begin{equation*}
            \Omega_{\alpha}''(\xi)\Gamma_{\alpha}'(\xi) > 0.
        \end{equation*}
        We have seen that $\Gamma_{\alpha}'(\xi) > 0$, hence
        \begin{equation*}
            \Omega_{\alpha}''(\xi) > 0.
        \end{equation*}
        It follows that $\Omega_{\alpha}(x) > 0$ in some punctured neighbourhood of $\xi$.
    \end{proof}
\end{lemma}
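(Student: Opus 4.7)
The plan is to establish $\Omega_\alpha''(\xi) > 0$; since Lemma \ref{lemma:order_of_zero} already gives $\Omega_\alpha(\xi) = \Omega_\alpha'(\xi) = 0$, Taylor's formula then forces $\Omega_\alpha(x) > 0$ on a punctured neighborhood of $\xi$.

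The first step is to pin down the sign of $\Gamma_\alpha'(\xi)$. Because $\varphi_\alpha$ has a simple zero at $\xi$ (Lemma \ref{lemma:simple_zero}) while $\Omega_\alpha$ has a double zero there (Lemma \ref{lemma:order_of_zero}), one has $\varphi_\alpha'(\xi) = i\Gamma_\alpha'(\xi)$, which is nonzero and purely imaginary. Writing $\varphi_\alpha'(\xi) = e^{i\gamma}\lvert\varphi_\alpha'(\xi)\rvert$, I need to rule out $\gamma = -\pi/2$. Using $E(\xi) = e^{-i\alpha}\lvert E(\xi)\rvert$ and the factorization $\varphi_\alpha(z) = (z-\xi)\psi_\alpha(z)$ with $\psi_\alpha(\xi) = \varphi_\alpha'(\xi)\neq 0$, I would expand
$$\frac{f(z)}{E(z)} = e^{i\alpha}\left(1 + (z-\xi)e^{i\gamma}e^{i\delta(z)}\frac{\lvert\varphi_\alpha(z)\rvert}{\lvert z-\xi\rvert\,\lvert E(z)\rvert}\right), \qquad \delta(z) \to 0,$$
and invoke $\lvert f/E\rvert \leq 1$ throughout $\mathbb{C}_+$. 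Squaring and keeping the leading-order correction, as $z\to\xi$ along directions in $\mathbb{C}_+$, forces $\cos(\arg(z-\xi)+\gamma) \leq 0$ for every $\arg(z-\xi)\in(0,\pi)$, which is possible only when $\gamma = \pi/2$. Consequently $\Gamma_\alpha'(\xi) = \lvert\varphi_\alpha'(\xi)\rvert > 0$. This is essentially the dichotomy argument already used in Lemma \ref{lemma:simple_zero}.

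The second step is to apply Theorem \ref{thm:HB} to $\psi_\alpha(z) = \varphi_\alpha(z)/(z-\xi)$. Because $\varphi_\alpha\in HB$ and $\xi$ is a simple real zero of $\varphi_\alpha$, division by $(z-\xi)$ preserves the Hermite-Biehler property, so $\psi_\alpha \in HB$ with $\psi_\alpha(\xi)\neq 0$. Its real and imaginary parts satisfy $A_\psi(x) = \Omega_\alpha(x)/(x-\xi)$ and $B_\psi(x) = \Gamma_\alpha(x)/(x-\xi)$ for $x\neq\xi$, and Taylor expansion at $\xi$ gives $A_\psi(\xi) = 0$, $A_\psi'(\xi) = \Omega_\alpha''(\xi)/2$, and $B_\psi(\xi) = \Gamma_\alpha'(\xi)$. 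The strict inequality from Theorem \ref{thm:HB} (applicable since $\psi_\alpha(\xi)\neq 0$) then reads
$$A_\psi'(\xi)B_\psi(\xi) - A_\psi(\xi)B_\psi'(\xi) = \tfrac{1}{2}\Omega_\alpha''(\xi)\,\Gamma_\alpha'(\xi) > 0.$$
Combined with $\Gamma_\alpha'(\xi) > 0$ from the first step, this yields $\Omega_\alpha''(\xi) > 0$, which completes the proof.

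I expect the main obstacle to be the sign determination of $\Gamma_\alpha'(\xi)$: the Hermite-Biehler property of $\varphi_\alpha$ alone does not distinguish the two purely imaginary candidates for $\varphi_\alpha'(\xi)$, so one must invoke the full extremality of $f/E$ at $\xi$ via the bound $\lvert f/E\rvert\leq 1$ on the upper half-plane. Once the correct sign is fixed, the conclusion is a clean application of the Wronskian-type inequality \eqref{eq:interlacing_ineq} to the auxiliary function $\psi_\alpha$.
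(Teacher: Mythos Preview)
Your proposal is correct and follows essentially the same route as the paper's proof: determine the sign of $\Gamma_\alpha'(\xi)$ by expanding $f/E$ near $\xi$ and invoking $\lvert f/E\rvert\le 1$ in $\mathbb{C}_+$, then apply the Wronskian inequality of Theorem~\ref{thm:HB} to $\psi_\alpha=(z-\xi)^{-1}\varphi_\alpha$ to deduce $\Omega_\alpha''(\xi)>0$. Your write-up in fact spells out the Taylor computation $A_\psi'(\xi)B_\psi(\xi)-A_\psi(\xi)B_\psi'(\xi)=\tfrac12\Omega_\alpha''(\xi)\Gamma_\alpha'(\xi)$ more explicitly than the paper does.
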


Theorem \ref{thm:thm1} now follows by combining Lemmas \ref{lemma:simple_zero} and \ref{lemma:positivity}. Corollaries \ref{cor:at_0} and \ref{cor:sign_free} follow immediately from the theorem.

\section{Proof of Theorem \ref{thm:embedding_norm}}

In this section we prove Theorem \ref{thm:embedding_norm}. Recall that we denote by $C(p,E)$ the norm of the operator
\begin{equation*}
    i_{p, E} : \mathcal{H}^{p}(E) \to \mathcal{H}^{\infty}(E).
\end{equation*}
We begin by reducing the problem to real entire functions.

\begin{lemma}\label{lemma:reduction_to_real}
    Let $0 < p < \infty$. Suppose there exists $C > 0$, such that $\norm{f/E}_{\infty} \leq C \norm{f/E}_{p}$ for all real entire functions $f \in \mathcal{H}^{p}(E)$. Then $C(p, E) \leq C$.
    \begin{proof}
        Let $f \in \mathcal{H}^{p}(E)$. Then $f/E$ decays to zero along the real-axis, by Theorem \ref{thm:dyakonov}, and hence there exists $\xi \in \mathbb{R}$, such that
        \begin{equation*}
            \lvert f(\xi)/E(\xi) \rvert = \norm{f/E}_{\infty}.
        \end{equation*}
        Let $\sigma \in \mathbb{R}$ be such that
        \begin{equation*}
            f(\xi)/E(\xi) = e^{i\sigma}\overline{f(\xi)}/E(\xi)
        \end{equation*}
        Let $g \in \mathcal{H}^{p}(E)$ be the real entire function
        \begin{equation*}
            g(z) = \frac{e^{-i\sigma/2}f(z) + e^{i\sigma/2}f^{\#}(z)}{2}.
        \end{equation*}
        It is clear that $\norm{g/E}_{\infty} \leq \norm{f/E}_{\infty}$ and
        \begin{equation*}
            \lvert g(\xi)/E(\xi) \rvert= \left\lvert \frac{e^{-i\sigma/2}f(\xi)/E(\xi) + e^{i\sigma/2}\overline{f(\xi)}/E(\xi)}{2} \right\rvert = \norm{f/E}_{\infty}.
        \end{equation*}
        Since $g$ is real entire it follows by assumption that
        \begin{equation*}
            \norm{f/E}_{\infty} = \norm{g/E}_{\infty} \leq C\norm{g/E}_{p} \leq C\norm{f/E}_{p}.
        \end{equation*}
        Thus $C(p,E) \leq C$ as claimed.
    \end{proof}
\end{lemma}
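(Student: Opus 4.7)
The plan is to use a symmetrization trick that turns an arbitrary $f \in \mathcal{H}^{p}(E)$ into a real entire $g \in \mathcal{H}^{p}(E)$ without increasing either $\norm{g/E}_{p}$ or $\norm{g/E}_{\infty}$, while simultaneously preserving the value of $\lvert f/E \rvert$ at the point where it attains its supremum. Once we have such a $g$, the hypothesis applied to $g$ gives the inequality for $f$ immediately.

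To make this work, I would first use Theorem \ref{thm:dyakonov} (which applies because $\varphi' \in L^{\infty}$) to conclude that $\lvert f/E \rvert$ vanishes at infinity on $\mathbb{R}$, so that the supremum $\norm{f/E}_{\infty}$ is actually attained at some real point $\xi$. Write $f(\xi)/E(\xi) = e^{i\psi}\lvert f(\xi)/E(\xi) \rvert$ and set $\sigma = 2\psi$, so that $e^{-i\sigma/2}f(\xi)$ is real and positive. Then define
\begin{equation*}
    g(z) = \tfrac{1}{2}\bigl( e^{-i\sigma/2}f(z) + e^{i\sigma/2}f^{\#}(z)\bigr),
\end{equation*}
which is real entire by construction (its $\#$-operation fixes it) and, by the definition of $\mathcal{H}^{p}(E)$, belongs to $\mathcal{H}^{p}(E)$ because the defining conditions $f/E, f^{\#}/E \in H^{p}$ are symmetric in $f$ and $f^{\#}$.

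The verification then proceeds by three easy estimates on the real axis. Since $\lvert f^{\#}(x)\rvert = \lvert f(x)\rvert$ for $x \in \mathbb{R}$, the triangle inequality gives $\lvert g(x)/E(x)\rvert \leq \lvert f(x)/E(x)\rvert$ pointwise, so both $\norm{g/E}_{p} \leq \norm{f/E}_{p}$ and $\norm{g/E}_{\infty} \leq \norm{f/E}_{\infty}$. On the other hand, by the choice of $\sigma$ the two terms defining $g(\xi)$ are equal and real, yielding $\lvert g(\xi)/E(\xi)\rvert = \lvert f(\xi)/E(\xi)\rvert = \norm{f/E}_{\infty}$. Combining these, $\norm{f/E}_{\infty} = \lvert g(\xi)/E(\xi)\rvert \leq \norm{g/E}_{\infty} \leq C\norm{g/E}_{p} \leq C\norm{f/E}_{p}$, which is the desired bound.

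The only nontrivial input is Dyakonov's theorem, needed to guarantee that $\xi$ exists; without a genuine maximizer on $\mathbb{R}$, the phase alignment producing $g$ would have no natural reference point. Everything else is a bookkeeping calculation with the $\#$-involution and the bilinearity of the norms, and the step $g \in \mathcal{H}^{p}(E)$ is immediate from the symmetry of the definition of $\mathcal{H}^{p}(E)$ under $f \mapsto f^{\#}$.
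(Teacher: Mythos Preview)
Your approach is exactly the paper's: use Dyakonov to find a maximizing point $\xi$, rotate $f$ by a unimodular constant, and symmetrize via $g=\tfrac12(e^{-i\sigma/2}f+e^{i\sigma/2}f^{\#})$. There is, however, a small slip in your choice of $\sigma$. You set $\sigma=2\psi$ with $\psi=\arg\bigl(f(\xi)/E(\xi)\bigr)$ and then assert that $e^{-i\sigma/2}f(\xi)$ is real and positive; but $e^{-i\psi}f(\xi)=\lvert f(\xi)/E(\xi)\rvert\,E(\xi)$, which is real only when $E(\xi)$ is real. Consequently the two summands in $g(\xi)$ need not coincide, and the identity $\lvert g(\xi)/E(\xi)\rvert=\lvert f(\xi)/E(\xi)\rvert$ fails in general with your $\sigma$.

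The fix is immediate and is precisely what the paper does: take $\sigma$ so that $f(\xi)=e^{i\sigma}\overline{f(\xi)}$, i.e.\ $\sigma=2\arg f(\xi)$ rather than $2\arg\bigl(f(\xi)/E(\xi)\bigr)$. Then $e^{-i\sigma/2}f(\xi)=e^{i\sigma/2}\overline{f(\xi)}$ genuinely holds, $g(\xi)=e^{-i\sigma/2}f(\xi)$, and your chain of inequalities goes through verbatim. With this correction your argument and the paper's are identical.
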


We are now ready for the main result of this section.

\begin{theorem}\label{thm:est_kp}
    Let $0 < p < \infty$. Then
    \begin{equation*}
        C(p) \leq \frac{\norm{\varphi'}_{\infty}^{1/p}}{2^{1/p}K(p)},
    \end{equation*}
    where
    \begin{equation*}
        K(p) = \left( \int_{-\frac{\pi}{2}}^{\frac{\pi}{2}} \lvert \cos(x) \rvert^{p}dx \right)^{1/p}.
    \end{equation*}
\end{theorem}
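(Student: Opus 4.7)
The plan is to reduce to real entire $f$ via Lemma \ref{lemma:reduction_to_real}, apply Theorem \ref{thm:thm1} at the point $\xi$ where $|f/E|$ attains its supremum, and then convert the pointwise lower bound into an integral lower bound by changing variables via the phase function $\varphi$.

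First I would invoke Theorem \ref{thm:dyakonov} (which applies since $\varphi' \in L^\infty$) to ensure $|f(x)/E(x)| \to 0$ as $|x| \to \infty$, so that the supremum is attained at some $\xi \in \mathbb{R}$. After replacing $f$ by $-f$ if necessary I may assume $f(\xi) = \|f/E\|_\infty |E(\xi)|$, and I pick $\alpha$ with $E(\xi) = e^{-i\alpha}|E(\xi)|$. Theorem \ref{thm:thm1} then gives
\begin{equation*}
    f(x) \geq \|f/E\|_\infty \, A_\alpha(x), \quad b_l \leq x \leq b_r,
\end{equation*}
where $b_l, b_r$ are the zeros of $B_\alpha$ adjacent to $\xi$.

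The central computation is the identification of $A_\alpha$ in terms of the phase function. From $E_\alpha = e^{i\alpha}E$ and $\Theta_E = e^{i\varphi}$ one gets $\arg E_\alpha(x) = \alpha - \varphi(x)/2$ on the real line (up to a branch choice), hence
\begin{equation*}
    A_\alpha(x) = |E(x)|\cos\!\bigl(\alpha - \varphi(x)/2\bigr), \qquad B_\alpha(x) = |E(x)|\sin\!\bigl(\alpha - \varphi(x)/2\bigr).
\end{equation*}
With the substitution $\theta(x) = \varphi(x)/2 - \alpha$ (an increasing bijection of $\mathbb{R}$ onto a subinterval of $\mathbb{R}$), the zeros $b_l, b_r$ of $B_\alpha$ correspond to $\theta = \mp \pi$, the zeros $a_l, a_r$ of $A_\alpha$ flanking $\xi$ to $\theta = \mp \pi/2$, and on $[a_l, a_r]$ the quantity $A_\alpha(x)/|E(x)| = \cos\theta$ is non-negative. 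Combined with Theorem \ref{thm:thm1}, this yields
\begin{equation*}
    |f(x)/E(x)| \geq \|f/E\|_\infty \cos\theta(x), \qquad x \in [a_l, a_r].
\end{equation*}

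The final step is the change of variables $dx = 2\, d\theta/\varphi'(x)$ combined with $\varphi'(x) \leq \|\varphi'\|_\infty$:
\begin{equation*}
    \|f/E\|_p^p \geq \int_{a_l}^{a_r}\!\!|f(x)/E(x)|^p\, dx \geq \|f/E\|_\infty^p \int_{-\pi/2}^{\pi/2}\!\!(\cos\theta)^p \frac{2\, d\theta}{\varphi'(x(\theta))} \geq \|f/E\|_\infty^p \cdot \frac{2 K(p)^p}{\|\varphi'\|_\infty}.
\end{equation*}
Taking $p$-th roots and rearranging gives the stated bound on $C(p,E)$. The routine parts are the reduction to real $f$ (handled by Lemma \ref{lemma:reduction_to_real}) and the change of variables; the main obstacle I would flag is the careful bookkeeping of the phase identification—checking that the $\alpha$ chosen from $E(\xi) = e^{-i\alpha}|E(\xi)|$ is consistent with the branch of $\varphi$ chosen so that $\theta(\xi) = 0$ and that $[a_l, a_r] \subset [b_l, b_r]$ via the interlacing from Theorem \ref{thm:HB}, so that Theorem \ref{thm:thm1} applies on the whole subinterval where $\cos\theta \geq 0$.
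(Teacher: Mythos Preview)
Your proof is correct and follows essentially the same route as the paper: reduce to real entire $f$, locate the maximizing point $\xi$ via Dyakonov's theorem, apply the generalized H\"ormander inequality there, express $A_\alpha/|E|$ as a cosine of the phase, and change variables by $\varphi$. The only cosmetic differences are that the paper invokes Corollary~\ref{cor:sign_free} on $[a_l,a_r]$ directly instead of Theorem~\ref{thm:thm1} on $[b_l,b_r]$ followed by restriction, and it uses the substitution $t=\varphi(x)-2\alpha$ (leading to $|1+\cos t|^{p/2}$) rather than your $\theta=\varphi(x)/2-\alpha$, but these yield the same integral after a half-angle identity.
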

Before we begin the proof let us remark that estimating $K(p)$ will give us Theorem \ref{thm:embedding_norm}.
\begin{proof}
    By Lemma \ref{lemma:reduction_to_real} it suffices to consider real entire functions $f \in \mathcal{H}^{p}(E)$. By Theorem \ref{thm:dyakonov} there exists $\xi \in \mathbb{R}$, such that
    \begin{equation*}
        \lvert f(\xi) \rvert = \lvert E(\xi) \rvert \norm{f/E}_{\infty}.
    \end{equation*}
    Let $\alpha$ be such that $e^{i\alpha}E(\xi) = \lvert E(\xi) \rvert$, then by Corollary \ref{cor:sign_free}
    \begin{equation*}
        \lvert f(x) \rvert \geq \norm{f/E}A_{\alpha}(x), \; \; a_{l} \leq x \leq a_{r},
    \end{equation*}
    where $a_{l}$ and $a_{r}$ are the real simple zeros of $A_{\alpha}$ to the left and right of $\xi$, respectively. Thus we obtain
    \begin{equation*}
        \int_{\mathbb{R}} \frac{\lvert f(x) \rvert^{p}}{\lvert E(x) \rvert^{p}}dx \geq \int_{a_{l}}^{a_{r}} \frac{\lvert f(x) \rvert^{p}}{\lvert E(x) \rvert^{p}}dx \geq \norm{f/E}_{\infty}^{p} \int_{a_{l}}^{a_{r}} \frac{\lvert A_{\alpha}(x) \rvert^{p}}{\lvert E(x) \rvert^{p}}dx.
    \end{equation*}
    Thus to obtain estimates for $C(p)$ it will suffice to estimate (from below)
    \begin{equation*}
        I(\alpha, p) = \int_{a_{l}}^{a_{r}} \frac{\lvert A_{\alpha}(x) \rvert^{p}}{\lvert E(x) \rvert^{p}}dx,
    \end{equation*}
    where $a_{l}$ and $a_{r}$ are any two consecutive simple zeros of $A_{\alpha}$. Recalling that $A_{\alpha}$ is the real part of $e^{i\alpha}E$ we obtain
    \begin{equation*}
        I(\alpha, p) = 2^{-p} \int_{a_{l}}^{a_{r}} \lvert 1 + e^{-2i\alpha}\Theta_{E}(x) \rvert^{p}dx = 2^{-p/2} \int_{a_{l}}^{a_{r}} \lvert 1 + \text{Re}(e^{-2i\alpha}\Theta_{E}(x)) \rvert^{p/2}dx.
    \end{equation*}
    Using that $\Theta_{E}(x) = e^{i\varphi(x)}$ gives
    \begin{equation*}
        \int_{a_{l}}^{a_{r}} \lvert 1 + 2\text{Re}(e^{-2i\alpha}\Theta_{E}(x)) \rvert^{p/2}dx = \int_{a_{l}}^{a_{r}} \lvert 1 + \cos(\varphi(x)-2\alpha) \rvert^{p/2}dx.
    \end{equation*}
    Now we perform a change of variable $\varphi(x)-2\alpha = t$. Since $A_{\alpha}$ vanishes at $a_{l}$ and $a_{r}$ it follows that $e^{i\alpha}E$ is purely imaginary at those points, thus
    \begin{equation*}
        \alpha + \arg E(a_{j}) = \pm \frac{\pi}{2}, \; \mod 2\pi, \; j = l, r.
    \end{equation*}
    Using that $\varphi = -2\arg(E)$ we obtain
    \begin{equation*}
        \varphi(a_{j}) - 2\alpha = \mp \pi, \; \mod 4\pi, \; j = l, r.
    \end{equation*}
    Also, since the $a_{l}$ and $a_{r}$ are consecutive it follows that one has the positive sign and the other the negative sign. In fact, since $A_{\alpha}$ has a maximum at $a_{l} < \xi < a_{r}$ it follows that $\arg(e^{i\alpha}E(a_{l})) = \pi/2$ and $\arg(e^{i\alpha}E(a_{r})) = -\pi/2$. Collecting all this gives
    \begin{equation*}
        \int_{a_{l}}^{a_{r}} \lvert 1 + \cos(\varphi(x)-2\alpha) \rvert^{p/2}dx = \int_{-\pi}^{\pi} \lvert 1 + \cos(t) \rvert^{p/2}\frac{dt}{\varphi'(x)} \geq \frac{1}{\norm{\varphi'}_{\infty}} \int_{-\pi}^{\pi} \lvert 1 + \cos(t) \rvert^{p/2}dt.
    \end{equation*}
    Thus we obtain
    \begin{equation*}
        I(\alpha,p) \geq \frac{1}{\norm{\varphi'}_{\infty}}2^{-p/2} \int_{-\pi}^{\pi} \lvert 1 + \cos(t) \rvert^{p/2}dt = \frac{2}{\norm{\varphi'}_{\infty}} \int_{-\frac{\pi}{2}}^{\frac{\pi}{2}} \lvert \cos(x) \rvert^{p}dx = \frac{2K(p)^{p}}{\norm{\varphi'}_{\infty}}.
    \end{equation*}
    We see that the bound is independent of $\alpha$ and the points $a_{l}$, $a_{r}$. Hence,
    \begin{equation*}
        C(p) \leq \frac{\norm{\varphi'}_{\infty}^{1/p}}{2^{1/p}K(p)}.
    \end{equation*}
    The result is proved.
\end{proof}

To prove Theorem \ref{thm:embedding_norm} it remains to estimate $K(p)$. A change of variable gives
\begin{equation*}
    K(p)^{p} = \int_{0}^{1} x^{-1/2}(1-x)^{(p-1)/2}dx = B(1/2,(p+1)/2) = \sqrt{\pi}\frac{\Gamma(\frac{p+1}{2})}{\Gamma(\frac{p+2}{2})},
\end{equation*}
where $B$ is the Euler Beta function and $\Gamma$ is the usual gamma function, see Appendix A in \cite{MR3243734}. Standard asymptotic estimates for the gamma function now give
\begin{equation*}
    \frac{1}{K(p)^{p}} = \frac{1}{\sqrt{\pi}}\frac{\Gamma(\frac{p+2}{2})}{\Gamma(\frac{p+1}{2})} = \sqrt{\frac{p}{2\pi}} + O(p^{-1/2}), \; p \to \infty.
\end{equation*}
Theorem \ref{thm:embedding_norm} follows from the above asymptotics and Theorem \ref{thm:est_kp}. The non-asymptotic estimate for $C(p,E)$ in Equation \eqref{eq:non_asymptotic} can be obtained using the inequality
\begin{equation*}
    \Gamma(x+1/2) \leq x^{1/2}\Gamma(x), \; x > 0,
\end{equation*}
with $x = (p+1)/2$. A proof of this inequality may be found in \cite{MR29448}.

\section{The Extremal Functions}

    We wish to study the extremal functions with respect to the embedding $\mathcal{H}^{p}(E) \subset \mathcal{H}^{\infty}(E)$, that is functions satisfying
    \begin{equation*}
        \norm{f/E}_{\infty} = C(p,E) \norm{f/E}_{p}.
    \end{equation*}
    In this section we consider only $E \in HB$ function without real zeros, this is not really any loss of generality. In this case the space $\mathcal{H}^{p}(E)$ has no common zeros. We now highlight a key difference between Paley-Wiener spaces and generic de Branges spaces. There is no guarantee that extremal functions exist, even for $p=2$. Indeed, we have the following result.
    \begin{proposition}
        Let $E \in HB$ and $\varphi' \in L^{\infty}$. Then there exists a function $f \in \mathcal{H}^{2}(E)$ satisfying
        \begin{equation*}
            \norm{f/E}_{\infty} = C(2,E) \norm{f/E}_{2},
        \end{equation*}
        if and only if $\varphi'$ attains its supremum at a finite point.
        \begin{proof}
            For fixed $\xi \in \mathbb{R}$ the kernel function
            \begin{equation*}
                K_{\xi}(z) = \frac{E(z)\overline{E(\xi)}-E(z)^{\#}\overline{E(\xi)^{\#}}}{2\pi i(\xi-z)},
            \end{equation*}
            is extremal for the continuous linear functional
            \begin{equation*}
                \ell_{\xi}(f) = f(\xi)/\lvert E(\xi) \rvert, \; f \in \mathcal{H}^{2}(E).
            \end{equation*}
            Moreover,
            \begin{equation*}
                K_{\xi}(\xi) = \pi^{-1}\text{Im}(\overline{E'(\xi)}E(\xi)) = \frac{1}{2\pi} \lvert E(\xi) \rvert^{2} \varphi'(\xi).
            \end{equation*}
            Thus
            \begin{equation*}
                \frac{K_{\xi}(\xi)}{\lvert E(\xi) \rvert} = \frac{\norm{K_{\xi}/E}_{2}}{\lvert E(\xi) \rvert}\norm{K_{\xi}/E}_{2} = \frac{1}{\sqrt{2\pi}}\varphi'(\xi)^{1/2}\norm{K_{\xi}/E}_{2}.
            \end{equation*}
            The result now follows since
            \begin{equation*}
                C(2,E) = \sup_{x\in \mathbb{R}} \frac{1}{\sqrt{2\pi}}\varphi'(\xi)^{1/2}.
            \end{equation*}
        \end{proof}
    \end{proposition}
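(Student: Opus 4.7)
The plan is to establish the equivalence by combining the reproducing kernel computation above, which yields $C(2,E,\xi) = (2\pi)^{-1/2}\varphi'(\xi)^{1/2}$, with the elementary identity $C(2,E) = \sup_{\xi \in \mathbb{R}} C(2,E,\xi)$ (one direction follows from $\|f/E\|_\infty = \sup_\xi |f(\xi)/E(\xi)|$, the other by applying the definition of $C(2,E,\xi_0)$ to an arbitrary $f$). I then treat the two directions of the iff separately.

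For the sufficiency direction, suppose $\varphi'$ attains its supremum at some $\xi_0 \in \mathbb{R}$. I take $f = K_{\xi_0}$. The extremality of the reproducing kernel for point evaluation at $\xi_0$ gives
\begin{equation*}
\frac{|K_{\xi_0}(\xi_0)|}{|E(\xi_0)|} = \frac{1}{\sqrt{2\pi}}\varphi'(\xi_0)^{1/2}\|K_{\xi_0}/E\|_2 = C(2,E)\|K_{\xi_0}/E\|_2.
\end{equation*}
Combined with the a priori bounds $|K_{\xi_0}(\xi_0)/E(\xi_0)| \leq \|K_{\xi_0}/E\|_\infty \leq C(2,E)\|K_{\xi_0}/E\|_2$, this forces equality throughout, so $K_{\xi_0}$ realizes the extremal.

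For the necessity direction, suppose $f \in \mathcal{H}^2(E)$ is nonzero and extremal. Since $\varphi' \in L^\infty$, Theorem \ref{thm:dyakonov} ensures $|f(x)/E(x)| \to 0$ as $|x| \to \infty$, so $\|f/E\|_\infty$ is attained at some finite point $\xi_0 \in \mathbb{R}$. Then the chain
\begin{equation*}
C(2,E)\|f/E\|_2 = \|f/E\|_\infty = \frac{|f(\xi_0)|}{|E(\xi_0)|} \leq C(2,E,\xi_0)\|f/E\|_2 \leq C(2,E)\|f/E\|_2
\end{equation*}
collapses to equalities, giving $C(2,E,\xi_0) = C(2,E)$ and hence $\varphi'(\xi_0) = \|\varphi'\|_\infty$.

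There is no serious obstacle: the reproducing kernel computation in the proposition already carries most of the weight. The only subtle point is the use of Dyakonov's decay theorem to produce a finite maximizer of $|f/E|$ in the necessity direction; this is precisely where the standing hypothesis $\varphi' \in L^\infty$ is actually used, and without it an extremizing sequence could conceivably concentrate at infinity even when $\varphi'$ has no finite maximizer.
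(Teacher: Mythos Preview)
Your argument is correct and follows the same route as the paper: compute $C(2,E,\xi)=(2\pi)^{-1/2}\varphi'(\xi)^{1/2}$ via the reproducing kernel, identify $C(2,E)=\sup_\xi C(2,E,\xi)$, and read off the equivalence. Your write-up is in fact more complete than the paper's, which stops at ``the result now follows''; in particular, your appeal to Theorem~\ref{thm:dyakonov} to guarantee that $|f/E|$ attains its supremum at a finite point is exactly the missing step that makes the necessity direction rigorous.
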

    We are not sure if the same result is true for other $p$. In light of this it is natural to study the extremizers of the point evaluation functionals corresponding to real points. For $\xi \in \mathbb{R}$ we let $C(p,E,\xi)$ be the best constant, such that
    \begin{equation*}
        \left\lvert \frac{f(\xi)}{E(\xi)} \right\rvert \leq C(p,E,\xi)\norm{f/E}_{p}, \; f \in \mathcal{H}^{p}(E).
    \end{equation*}
    For $p = 2$ extremal functions are, of course, given by the kernels functions. For $0 < p < \infty$ a normal families argument gives the existence of a function $f_{p, E, \xi} \in \mathcal{H}^{p}(E)$ satisfying
    \begin{equation}\label{eq:extremal_pe}
        f_{p, E, \xi}(\xi) = \lvert E(\xi) \rvert C(p,E,\xi), \text{ and } \norm{f_{p, E, \xi}/E}_{p} = 1.
    \end{equation}
    It is clear that
    \begin{equation*}
        C(p, E) = \sup_{\xi \in \mathbb{R}} \, C(p,E,\xi).
    \end{equation*}
    Most of the basic properties of the extremal functions in the Paley-Wiener case extend to the de Branges setting without extra difficulty. We shall state the results, but only give sketches of proofs if they are the same as in the Paley-Wiener case. The reader interested in more details can see Section 3 in \cite{point_eval_in_PW}.

    \begin{proposition}
        Let $f = f_{p,E,\xi}$ satisfy \eqref{eq:extremal_pe} (for some $p, \xi$, and $E$). Then $f$ is real entire.
        \begin{proof}
            Indeed $g = 2^{-1}(f+f^{\#})$ satisfies $g(\xi) = \lvert E(\xi)\rvert C(p,E,\xi)$ and if $f(x)$ is not real for some real $x$, then $\lvert g(x) \rvert = \lvert \text{Re}(f(x)) \rvert < \lvert f(x) \rvert$, contradicting that $f$ is extremal.
        \end{proof}
    \end{proposition}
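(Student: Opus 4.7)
The plan is a symmetrization argument. Extremality forces $|f(\xi)/E(\xi)|$ to be as large as possible on the unit ball of $\mathcal{H}^{p}(E)$. If $f$ failed to be real entire, I would like to produce a real-entire competitor with the same value at $\xi$ but strictly smaller $\mathcal{H}^{p}(E)$-norm, which would contradict \eqref{eq:extremal_pe}.

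To this end, set $g(z) = \tfrac{1}{2}(f(z) + f^{\#}(z))$. Then $g^{\#} = g$, so $g$ is real entire. Since $\mathcal{H}^{p}(E)$ is closed under the involution $h \mapsto h^{\#}$ (the defining conditions $h/E,\, h^{\#}/E \in H^{p}(\mathbb{C}_{+})$ simply swap), we have $g \in \mathcal{H}^{p}(E)$. For real $x$, $f^{\#}(x) = \overline{f(x)}$, so $g(x) = \text{Re}\, f(x)$, giving the pointwise inequality $|g(x)/E(x)| \leq |f(x)/E(x)|$ and hence $\norm{g/E}_{p} \leq \norm{f/E}_{p} = 1$.

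Now I would evaluate at $\xi$: by \eqref{eq:extremal_pe} the number $f(\xi) = |E(\xi)| C(p,E,\xi)$ is real and nonnegative, so $g(\xi) = \text{Re}\, f(\xi) = f(\xi)$, and $|g(\xi)/E(\xi)| = C(p,E,\xi)$. If $f$ were not real entire, then $f - f^{\#} \not\equiv 0$, and its restriction to $\mathbb{R}$ equals $2i\, \text{Im}\, f$, a nonzero real-analytic function that vanishes only on an isolated set. Off that set $|g(x)| = |\text{Re}\, f(x)| < |f(x)|$, so $\norm{g/E}_{p} < 1$, and applying the defining inequality for $C(p,E,\xi)$ to $g$ yields
\begin{equation*}
    C(p,E,\xi) = |g(\xi)/E(\xi)| \leq C(p,E,\xi) \norm{g/E}_{p} < C(p,E,\xi),
\end{equation*}
a contradiction.

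I do not expect any real obstacle here: the whole argument is standard convexity/symmetrization, and the only point to verify carefully is that $g \in \mathcal{H}^{p}(E)$, which is immediate from the symmetry built into the definition of $\mathcal{H}^{p}(E)$.
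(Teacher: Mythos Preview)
Your proof is correct and follows exactly the same symmetrization argument as the paper: form $g=\tfrac12(f+f^{\#})$, use that $f(\xi)$ is real to get $g(\xi)=f(\xi)$, and obtain a strict norm drop if $f$ is not real on $\mathbb{R}$. You simply spell out in more detail why $g\in\mathcal{H}^{p}(E)$ and why the inequality is strict on a set of full measure, but the approach is identical.
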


    \begin{proposition}\label{prop:variational_int}
        Let $f = f_{p,E,\xi}$ satisfy \eqref{eq:extremal_pe} (for some $p, \xi$, and $E$). Suppose $r = r_{1}/r_{2}$ is a rational function with $\text{deg}(r_{2}) \geq \text{deg}(r_{1})$, such $r(\xi) = 0$ and $rf$ is entire. Then
        \begin{equation*}
            \int_{-\infty}^{\infty} \frac{r(x) \lvert f(x) \rvert^{p}}{\lvert E(x) \rvert^{p}}dx = 0.
        \end{equation*}
        \begin{proof}
            The point is that the function
            \begin{equation*}
                F(\epsilon) = \int_{-\infty}^{\infty} \frac{\lvert f(x) + \epsilon f(x)r(x) \rvert^{p}}{\lvert E(x)\rvert^{p}} dx,
            \end{equation*}
            has a minimum at $0$ since $f$ is extremal and $r(\xi) = 0$. Thus if $F$ is differentiable $F'(0) = 0$. One can justify differentiation under the integral which implies the result since
            \begin{equation*}
                \partial_{\epsilon} \lvert f(x) + \epsilon f(x)r(x) \rvert^{p} = p \lvert f(x) + \epsilon f(x)r(x) \rvert^{p-2} \lvert f(x) \rvert^{2} \text{Re}(r(x)\overline{(1+\epsilon r(x)}),
            \end{equation*}
            whenever $f(x) + \epsilon f(x)r(x) \neq 0$. Indeed, setting $\epsilon = 0$ gives
            \begin{equation*}
                0 = F'(0) = \int_{-\infty}^{\infty} \frac{\text{Re}(r(x)) \lvert f(x) \rvert^{p}}{\lvert E(x) \rvert^{p}}dx.
            \end{equation*}
            The result is thus proved for real $r$. For general $r$ applying the result with $r$ replaced with $ir$ proves the theorem.
        \end{proof}
    \end{proposition}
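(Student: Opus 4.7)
The plan is a standard first-order variational argument exploiting the extremality of $f$. Since $r(\xi) = 0$, the perturbed function $f_\epsilon(z) := f(z)(1 + \epsilon r(z))$ satisfies $f_\epsilon(\xi) = f(\xi) = \lvert E(\xi) \rvert C(p,E,\xi)$ for every $\epsilon \in \mathbb{C}$. Once I verify that $f_\epsilon \in \mathcal{H}^p(E)$ for small $\epsilon$, the definition of $C(p,E,\xi)$ together with $\lVert f/E \rVert_p = 1$ forces
\[
\lVert f_\epsilon/E \rVert_p \geq C(p,E,\xi)^{-1} \lvert f_\epsilon(\xi)/E(\xi) \rvert = 1 = \lVert f/E \rVert_p,
\]
so $F(\epsilon) := \lVert f_\epsilon/E \rVert_p^p$ attains its minimum at $\epsilon = 0$ as a function of $\epsilon \in \mathbb{C}$, and all its first partial derivatives there must vanish.

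The membership $f_\epsilon \in \mathcal{H}^p(E)$ is routine: $fr$ is entire by hypothesis, $fr/E$ extends continuously to all of $\mathbb{R}$ (since $E$ has no real zeros), and $\lvert fr/E \rvert \leq M\lvert f/E \rvert$ for $\lvert x \rvert$ large by the degree condition, giving $L^p$ integrability on the line; analyticity of $fr/E$ in $\mathbb{C}_+$ follows because poles of $r$ there are cancelled by zeros of $f$, and the same is true for $(fr)^\#/E$. The derivative $F'(0)$ is then obtained by differentiating under the integral sign, using the elementary identity $\partial_t \lvert 1+tr \rvert^p\big|_{t=0} = p\,\mathrm{Re}(r)$ for real $t$; this yields
\[
F'(0) = p\int \mathrm{Re}(r)\frac{\lvert f \rvert^p}{\lvert E \rvert^p}\,dx = 0.
\]
Applying the same reasoning with $r$ replaced by $ir$ (which still satisfies all the hypotheses) eliminates the imaginary part, and the two vanishings combine to the full complex statement.

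The main obstacle is the rigorous justification of differentiation under the integral, particularly when $p < 2$: the pointwise derivative of $\lvert 1+tr \rvert^p$ picks up a factor $\lvert 1+tr \rvert^{p-2}$ that is singular where $1+tr$ vanishes. For $p \geq 1$ the combination $\lvert 1+tr \rvert^{p-1}\lvert r \rvert$ is bounded uniformly in small $t$, providing a dominating function of the form $C\lvert r \rvert\lvert f \rvert^p/\lvert E \rvert^p$, which is integrable since $r$ is bounded on $\mathbb{R}$. For $0 < p < 1$ I would instead work with difference quotients and invoke the subadditivity estimate $\big|\lvert a\rvert^p - \lvert b\rvert^p\big| \leq \lvert a-b\rvert^p$ to obtain an integrable majorant, applying dominated convergence on the complement of small neighborhoods of the zero set of $1+tr$. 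This is the one genuinely technical point; the rest of the argument is formal.
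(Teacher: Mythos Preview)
Your proof is correct and follows exactly the same variational approach as the paper: perturb to $f_\epsilon = f(1+\epsilon r)$, use $r(\xi)=0$ and extremality to see that $\lVert f_\epsilon/E\rVert_p^p$ is minimized at $\epsilon=0$, differentiate under the integral sign, and then replace $r$ by $ir$ to handle the imaginary part. Your discussion of the membership $f_\epsilon \in \mathcal{H}^p(E)$ and of the dominated-convergence issue is in fact more detailed than the paper's own sketch, which simply asserts that differentiation under the integral can be justified and defers the technicalities to the Paley--Wiener reference.
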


    \begin{corollary}
        Let $f=f_{p,E,\xi}$ satisfy \eqref{eq:extremal_pe}. Then $f$ has only real simple zeros.
        \begin{proof}
            Suppose, for a contradiction, that $w$ is a non-real zero of $f$. Then since $f$ is real entire $\overline{w}$ is also a zero of $f$, thus for $\epsilon > 0$,
            \begin{equation*}
                g(z) = \frac{(z-w)(z-\overline{w})-\epsilon(z-\xi)^{2}}{(z-w)(z-\overline{w})}f(z) \in \mathcal{H}^{p}(E),
            \end{equation*}
            and $g(\xi) = f(\xi)$. Since for real $x \neq \xi$,
            \begin{equation*}
                \lvert g(x) \rvert = \frac{\lvert \lvert x - w \rvert^{2} -\epsilon(x-\xi)^{2} \rvert}{\lvert x-w \rvert^{2}}\lvert f(x) \rvert < \lvert f(x) \rvert,
            \end{equation*}
            for sufficiently small $\epsilon > 0$, we obtain the desired contradiction. We turn to proving that the zeros are simple. Indeed, suppose $\lambda \in \mathbb{R}$ is a zero of or order greater than or equal to $2$. Applying Proposition \ref{prop:variational_int} with $r(z) = (z-\xi)^{2}/(z-\lambda)^{2}$ gives
        \begin{equation*}
            \int_{-\infty}^{\infty} \frac{(x-\xi)^{2} \lvert f(x) \rvert^{p}}{(x-\lambda)^{2}\lvert E(x) \rvert^{p}}dx = 0,
        \end{equation*}
        which is a contradiction since the integrand is nonnegative.
        \end{proof}
    \end{corollary}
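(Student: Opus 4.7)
The plan is to establish the two conclusions separately, both via variational arguments that exploit extremality. Reality of zeros will come from perturbing $f$ by a rational factor that exploits the conjugate pairing of any non-real zero, while simplicity is a direct application of Proposition \ref{prop:variational_int}.

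For the reality claim, suppose $w \in \mathbb{C} \setminus \mathbb{R}$ is a zero of $f$. Since $f$ is real entire (by the preceding proposition), $\overline{w}$ is also a zero. For small $\epsilon > 0$ I would set
\begin{equation*}
    g(z) = f(z) \cdot \frac{(z-w)(z-\overline{w}) - \epsilon(z-\xi)^{2}}{(z-w)(z-\overline{w})}.
\end{equation*}
The poles of the multiplier at $w$ and $\overline{w}$ are cancelled by the corresponding zeros of $f$, so $g$ is entire. On the real axis,
\begin{equation*}
    |g(x)| = |f(x)| \cdot \left\lvert 1 - \frac{\epsilon(x-\xi)^{2}}{|x-w|^{2}} \right\rvert,
\end{equation*}
which for $\epsilon$ small enough is strictly less than $|f(x)|$ for every $x \neq \xi$ while $g(\xi) = f(\xi)$. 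This would contradict the extremality of $f$, provided one can verify $g \in \mathcal{H}^{p}(E)$.

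For the simplicity claim, suppose $\lambda \in \mathbb{R}$ is a zero of $f$ of order at least $2$. I would apply Proposition \ref{prop:variational_int} with $r(z) = (z-\xi)^{2}/(z-\lambda)^{2}$: then $r(\xi) = 0$, $r$ is rational with equal numerator and denominator degrees, and $rf$ is entire because $(z-\lambda)^{2}$ divides $f(z)$. The proposition forces
\begin{equation*}
    \int_{-\infty}^{\infty} \frac{(x-\xi)^{2} |f(x)|^{p}}{(x-\lambda)^{2} |E(x)|^{p}} \, dx = 0,
\end{equation*}
which is impossible because the integrand is nonnegative and not identically zero.

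The only point requiring real care is the verification $g \in \mathcal{H}^{p}(E)$ in the first part. The multiplier is a rational function of degree $0$ with limit $1-\epsilon$ at infinity, whose only possible poles are at $w$ and $\overline{w}$; after cancellation by the zeros of $f$ in each half-plane, $g/E$ differs from $f/E$ only by a bounded analytic factor on $\mathbb{C}_{+}$, and analogously for $g^{\#}/E$. This will place $g$ inside $\mathcal{H}^{p}(E)$ and yield the contradiction that completes the argument.
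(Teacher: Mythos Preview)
Your proposal is correct and matches the paper's proof essentially line for line: the same perturbation $g(z) = f(z)\,\dfrac{(z-w)(z-\overline{w}) - \epsilon(z-\xi)^{2}}{(z-w)(z-\overline{w})}$ for the reality of zeros, and the same application of Proposition~\ref{prop:variational_int} with $r(z) = (z-\xi)^{2}/(z-\lambda)^{2}$ for simplicity. Your added remark on why $g \in \mathcal{H}^{p}(E)$ is a welcome bit of extra care that the paper itself leaves implicit.
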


    We also have the following orthogonality relationship between zeros of an extremal function.

    \begin{proposition}
        Let $f=f_{p, E, \xi}$ satisfy\eqref{eq:extremal_pe} and $\lambda_{1}, \lambda_{2}$ be zeros of $f$. Then
        \begin{equation*}
            \int_{-\infty}^{\infty} \frac{(x-\xi)^{2} \lvert f(x) \rvert^{p}}{(x-\lambda_{1})(x-\lambda_{2})\lvert E(x) \rvert^{p}}dx = 0,
        \end{equation*}
        \begin{proof}
            Applying Proposition \ref{prop:variational_int} with $r(x) = (x-\lambda_{1})^{-1}(x-\lambda_{2})^{-1}(x-\xi)^{2}$ proves the statement.
        \end{proof}
    \end{proposition}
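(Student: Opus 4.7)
The plan is to derive the orthogonality identity as an immediate application of the variational identity from Proposition~\ref{prop:variational_int}. The entire task reduces to reverse-engineering, from the desired integrand, the correct rational function $r$ to feed into that proposition.

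Reading off the target integral, the natural candidate is
$$r(x) = \frac{(x-\xi)^{2}}{(x-\lambda_{1})(x-\lambda_{2})}.$$
I would then check the three hypotheses of Proposition~\ref{prop:variational_int} in order. Writing $r = r_{1}/r_{2}$ with $r_{1}(x) = (x-\xi)^{2}$ and $r_{2}(x) = (x-\lambda_{1})(x-\lambda_{2})$, both polynomials have degree $2$, so $\deg r_{2} \geq \deg r_{1}$. The double factor $(x-\xi)^{2}$ in the numerator gives $r(\xi) = 0$. To see that $rf$ is entire, I would invoke the preceding corollary, which established that every zero of $f$ is real and simple; hence each distinct factor $(x-\lambda_{j})$ in the denominator is absorbed by the corresponding simple zero of $f$, eliminating any would-be pole (and in the degenerate case $\xi = \lambda_{j}$, one copy of $(x-\xi)^{2}$ absorbs the singularity instead).

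With the hypotheses verified, Proposition~\ref{prop:variational_int} applied to this $r$ returns precisely the claimed identity. There is no substantial obstacle here: all the analytic content, such as the justification for differentiating under the integral sign and the use of extremality of $f$, was already packaged into Proposition~\ref{prop:variational_int}, and what remains is the algebraic choice of $r$, which simultaneously encodes the constraint that the admissible perturbation fixes $f(\xi)$ (via $r(\xi) = 0$) and keeps the perturbed function in the de Branges space (via $rf$ entire together with the degree condition controlling growth). The only minor subtlety is implicit: one should treat $\lambda_{1}$ and $\lambda_{2}$ as distinct zeros, since simplicity of the zeros of $f$ precludes $rf$ from being entire in the coincident case outside of degenerate configurations.
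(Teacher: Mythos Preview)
Your proof is correct and follows exactly the same route as the paper: choose $r(x) = (x-\xi)^{2}/\bigl((x-\lambda_{1})(x-\lambda_{2})\bigr)$ and invoke Proposition~\ref{prop:variational_int}. Your explicit verification of the hypotheses is in fact more detailed than the paper's one-line citation; note also that the degenerate case $\xi = \lambda_{j}$ you mention cannot occur, since $f(\xi) = \lvert E(\xi)\rvert\, C(p,E,\xi) \neq 0$.
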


    \subsection{$1 \leq p < \infty$}

    Now let us turn our attention to the convex range $p \geq 1$. In this case it is not difficult to show that there is a unique extremal function.
    \begin{proposition}
        Let $1 \leq p < \infty$. There exists a unique solution of \eqref{eq:extremal_pe}.
        \begin{proof}
            Existence has already been established. Suppose $f$ and $g$ are two solutions of \eqref{eq:extremal_pe}. Our goal is to show that $f = g$. For $0 \leq t \leq 1$ let
            \begin{equation*}
                F_{t}(z) = (1-t)f(z) + tg(z).
            \end{equation*}
            Then $F_{t}(\xi)/\lvert E(\xi) \rvert = C(p,E,\xi)$. It follows that $\norm{F_{t}/E}_{p} \geq 1$. Also, by the triangle inequality
            \begin{equation*}
                \norm{F_{t}/E}_{p} \leq (1-t)\norm{f/E}_{p} + t\norm{g/E}_{p} = 1.
            \end{equation*}
            Thus $\norm{F_{t}/E}_{p} = 1$. Since we have equality in the triangle inequality it follows that $f$ and $g$ are parallel and hence equal since they agree at $\xi$.
        \end{proof}
    \end{proposition}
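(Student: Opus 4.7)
The plan is the classical convexity argument for uniqueness of extremals in a strictly convex norm. Existence having already been obtained by a normal families argument, suppose $f$ and $g$ both satisfy \eqref{eq:extremal_pe}, and form the convex combination $F_t = (1-t)f + tg \in \mathcal{H}^p(E)$ for $t \in [0,1]$. By linearity of evaluation at $\xi$,
\begin{equation*}
    \frac{F_t(\xi)}{\lvert E(\xi) \rvert} = (1-t)\, C(p,E,\xi) + t\, C(p,E,\xi) = C(p,E,\xi),
\end{equation*}
so the defining inequality for $C(p,E,\xi)$ forces $\norm{F_t/E}_p \geq 1$. The $L^p$ triangle inequality, available precisely because $p \geq 1$, yields the reverse bound
\begin{equation*}
    \norm{F_t/E}_p \leq (1-t)\norm{f/E}_p + t\norm{g/E}_p = 1.
\end{equation*}
Hence $\norm{F_t/E}_p = 1$ and equality holds in the triangle inequality.

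For $1 < p < \infty$ the space $L^p$ is strictly convex, and equality in the triangle inequality between two nonzero functions forces them to be nonnegative scalar multiples of one another. Applied to $u = (1-t)f/E$ and $v = t g/E$ this gives $f/E = c\cdot g/E$ almost everywhere on $\mathbb{R}$ for some $c > 0$, hence $f = cg$ as entire functions by the identity theorem, and the normalization $f(\xi) = g(\xi) \neq 0$ pins $c = 1$. For the endpoint $p = 1$ the same strategy still works but requires more care: equality in the $L^1$ triangle inequality gives $(f/E)(x)\overline{(g/E)(x)} \geq 0$ a.e., which since $f$ and $g$ are real entire reduces to $f(x)g(x) \geq 0$ on $\mathbb{R}$; combined with the preceding corollary that any extremal has only simple real zeros (so a zero of $f$ not shared by $g$ would force $fg$ to change sign), the zero sets of $f$ and $g$ must coincide. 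The ratio $f/g$ is then an entire function that is real and positive on the real axis, and an argument comparing the outer factors of $f/E$ and $g/E$ in $H^1(\mathbb{C}_+)$ shows $f/g$ is constant, again forcing $f = g$.

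The main obstacle in the plan is the endpoint $p = 1$; for $1 < p < \infty$ strict convexity of $L^p$ carries the argument at once, whereas at $p = 1$ one has to upgrade the merely pointwise proportionality given by the equality condition to a constant proportionality, which is where the zero-set analysis and the Hardy-space input enter. Everything else in the proof (linearity of the extremality condition, the triangle inequality bound, and the final identification via the normalization at $\xi$) is routine.
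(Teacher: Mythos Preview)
Your convexity argument for $1 < p < \infty$ is exactly the paper's: form $F_t = (1-t)f + tg$, squeeze $\norm{F_t/E}_p$ between $1$ and $1$, and invoke strict convexity of $L^p$. The paper does not separate out $p = 1$; it simply asserts that equality in the triangle inequality forces $f$ and $g$ to be parallel, which is the point you are right to flag as needing more at the endpoint.

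Your attempted repair at $p = 1$, however, is incomplete. The reduction to $f(x)g(x) \geq 0$ on $\mathbb{R}$ and the conclusion that $f$ and $g$ share the same (simple, real) zero set are fine. The trouble is the last step: ``an argument comparing the outer factors of $f/E$ and $g/E$ in $H^1(\mathbb{C}_+)$ shows $f/g$ is constant.'' No such argument is actually supplied, and the claim is not obvious---an entire zero-free function that is positive on $\mathbb{R}$ need not be constant (take $e^{z^2}$), and passing to outer parts does not by itself control $h = f/g$ in $\mathbb{C}_+$, since the quotient of two outer functions can be an arbitrary outer function. A clean way to close the gap uses more of what you have already established: for every $t \in (0,1)$ the function $F_t = g\bigl((1-t)h + t\bigr)$ is itself a solution of \eqref{eq:extremal_pe} and hence (by the preceding corollary) has only real zeros, while $(1-t)h + t$ has no real zeros because $h > 0$ on $\mathbb{R}$; therefore $(1-t)h + t$ has no zeros at all. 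Thus $h$ omits every value $-t/(1-t) \in (-\infty,0)$, and since $h$ also omits $0$, Picard's little theorem forces $h$ to be constant; the normalization $h(\xi) = 1$ then gives $f = g$.
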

    Uniqueness implies that the extremal function is of mean-type $0$. The mean type of a holomorphic function is the upper half-plane is defined to be
    \begin{equation*}
        \limsup_{y \to \infty} \frac{\log \lvert f(iy) \rvert}{y}.
    \end{equation*}
    Mean type in the lower half-plane is defined analogously. For $PW^{p}_{\pi}$ it was shown in \cite{point_eval_in_PW} that the extremal function is of exponential type $\pi$. This is equivalent to at least one of the functions $e^{i\pi z}f$ and $e^{i\pi z}f^{\#}$ having mean-type $0$ in the upper half-plane. We show that, for $p \geq 1$, this remains true in the de Branges space setting.

    \begin{proposition}
        Let $1 \leq p < \infty$ and $f$ be extremal for \eqref{eq:extremal_pe}. Then $f/E$ and $f^{\#}/E$ have mean-type $0$.
        \begin{proof}
            Since $f=f^{\#}$ it suffices to prove it for one of the functions. Suppose, for a contradiction, $f/E$ has strictly negative mean-type. Then for sufficiently small $\epsilon > 0$ the function $e^{-i\epsilon z}f$ belongs to $\mathcal{H}^{p}(E)$ and is also extremal. Since it is not a multiple of $f$ this is a contradiction.
        \end{proof}
    \end{proposition}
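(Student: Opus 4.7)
The plan is to follow the scheme suggested in the author's outline: assuming $f/E$ has strictly negative mean-type, multiplication by a small exponential yields a competitor for the extremal problem that still satisfies \eqref{eq:extremal_pe} yet fails to be real entire, contradicting the preceding proposition.

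Since $f$ is real entire, $f^{\#}=f$, and so $f/E = f^{\#}/E$; it thus suffices to prove the statement for $f/E$. Because $f\in\mathcal{H}^{p}(E)$, the quotient $f/E$ lies in $H^{p}(\mathbb{C}_{+})\subset N^{+}(\mathbb{C}_{+})$, the Smirnov class, which forces the mean-type of $f/E$ in $\mathbb{C}_{+}$ to be at most $0$. The task therefore reduces to ruling out strict negativity.

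Assume for contradiction that the mean-type of $f/E$ equals $-\delta$ with $\delta>0$. For $0<\epsilon<\delta$ I would consider
\begin{equation*}
    g(z) = e^{-i\epsilon(z-\xi)} f(z).
\end{equation*}
Since $\lvert e^{-i\epsilon(x-\xi)}\rvert = 1$ on $\mathbb{R}$, one has $\lvert g/E\rvert = \lvert f/E\rvert$ pointwise on the real line, so $\norm{g/E}_{p}=1$; moreover $g(\xi)=f(\xi)=\lvert E(\xi)\rvert\,C(p,E,\xi)$. A direct computation shows that $g/E = e^{-i\epsilon(z-\xi)} f/E$ has mean-type $\epsilon-\delta<0$ while $g^{\#}/E = e^{i\epsilon(z-\xi)} f/E$ has mean-type $-\epsilon-\delta<0$. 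Both quotients are of bounded type in $\mathbb{C}_{+}$ and inherit $L^{p}(\mathbb{R})$ boundary data from $f/E$, so the Smirnov-class characterization $N^{+}(\mathbb{C}_{+})\cap L^{p}(\mathbb{R}) = H^{p}(\mathbb{C}_{+})$ places both in $H^{p}(\mathbb{C}_{+})$. Hence $g\in\mathcal{H}^{p}(E)$ and $g$ satisfies \eqref{eq:extremal_pe}.

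By the preceding proposition, $g$ must then be real entire. However, $g(z)=e^{-i\epsilon(z-\xi)}f(z)$ is the product of the nonzero real entire function $f$ with a factor that is not real on $\mathbb{R}$ when $\epsilon\neq 0$, so $g$ is not real on $\mathbb{R}$. This contradiction forces the mean-type of $f/E$ (and hence of $f^{\#}/E$) to be $0$. The one technical point deserving care is the passage from non-positive mean-type plus $L^{p}$ boundary values to $H^{p}$-membership; this rests on $f/E \in N^{+}(\mathbb{C}_{+})$, together with the Smirnov characterization of $H^{p}(\mathbb{C}_{+})$ inside $N^{+}(\mathbb{C}_{+})$.
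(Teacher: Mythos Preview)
Your proof is correct and follows essentially the same route as the paper: perturb $f$ by a small exponential factor to produce another function satisfying \eqref{eq:extremal_pe}, and then derive a contradiction. The only difference is in the finishing step --- you contradict the proposition that extremizers are real entire, whereas the paper contradicts the uniqueness proposition (the one immediately preceding the present statement); both endings are valid and equally short.
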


\section{Separation of Zeros}

    In \cite{point_eval_in_PW} it was shown that for $p \geq 1/2$ the zeros of the extremal functions in $PW^{p}_{\pi}$ are uniformly separated, that is, there exists a constant $\delta > 0$, such that
    \begin{equation*}
        \lvert \lambda_{1} - \lambda_{2} \rvert > \delta,
    \end{equation*}
    for all $\lambda_{1}, \lambda_{2}$, such that $f(\lambda_{1})=f(\lambda_{2})=0$. Our goal in this section is to apply our extension of Hörmander's inequality to extend this result to the de Branges setting. We continue to assume that our Hermite-Biehler function has no real zeros. For $p = 2$ the zeros of the extremal function are uniformly separated if and only if $\varphi' \in L^{\infty}$, thus we consider only de Branges spaces with this property. Unfortunately, we are only able to handle the case $p \geq 1$.

    \begin{theorem}
        Let $p \geq 1$, $\varphi' \in L^{\infty}$, and $f = f_{p,E,\xi}$ satisfy \eqref{eq:extremal_pe}. Then the zeros of $f$ are uniformly separated.
    \end{theorem}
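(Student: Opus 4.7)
The plan is a proof by contradiction, combining the variational identity of Proposition \ref{prop:variational_int} with the lower bound of Corollary \ref{cor:sign_free}. Normalize so that $\norm{f/E}_p = 1$, and suppose the zeros of $f$ are not uniformly separated. Then there exist consecutive real zeros $\lambda_1^{(n)} < \lambda_2^{(n)}$ of $f$ with $\delta_n = \lambda_2^{(n)} - \lambda_1^{(n)} \to 0$, and between each pair a local extremum $\eta_n$ with $f(\eta_n) \neq 0$. Recall that all real zeros of $f$ are simple, by the corollary following Proposition \ref{prop:variational_int}.

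The first step is to apply Proposition \ref{prop:variational_int} with $r_n(z) = (z - \xi)^2/((z - \lambda_1^{(n)})(z - \lambda_2^{(n)}))$, producing
\begin{equation*}
    \int_{\mathbb{R}} \frac{(x - \xi)^2 \lvert f(x)\rvert^p}{(x - \lambda_1^{(n)})(x - \lambda_2^{(n)})\lvert E(x)\rvert^p}\, dx = 0.
\end{equation*}
The integrand is negative only on $(\lambda_1^{(n)}, \lambda_2^{(n)})$. Taylor-expanding $f$ at the simple zeros $\lambda_i^{(n)}$ and performing the change of variable $x = \lambda_1^{(n)} + s\delta_n$ shows that the magnitude of the negative contribution is of order $(\eta_n - \xi)^2 \lvert f(\eta_n)/E(\eta_n)\rvert^p/\delta_n$, while the positive contribution over the complement is uniformly bounded in $n$ (using $\norm{f/E}_p = 1$ together with the decay at infinity from Theorem \ref{thm:dyakonov}). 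The identity therefore forces
\begin{equation*}
    (\eta_n - \xi)^2 \left\lvert \frac{f(\eta_n)}{E(\eta_n)}\right\rvert^p \lesssim \delta_n.
\end{equation*}

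The final step is to contradict this bound. If the sequence $\{\eta_n\}$ remains bounded, a normal families argument applied to the rescaled functions $f(\eta_n + \delta_n t)/\delta_n$ extracts a subsequential limit containing a double real zero of $f$, contradicting that its real zeros are simple. In the regime $\lvert \eta_n \rvert \to \infty$, Theorem \ref{thm:dyakonov} forces $\lvert f(\eta_n)/E(\eta_n)\rvert \to 0$ on its own, so the bound above is not obviously contradictory. To close this case I would use a direct perturbation: setting
\begin{equation*}
    g_n(z) = \frac{(z - \lambda_1^{(n)} + \delta_n)(z - \lambda_2^{(n)} - \delta_n)}{(z - \lambda_1^{(n)})(z - \lambda_2^{(n)})}\, f(z),
\end{equation*}
one verifies that $g_n \in \mathcal{H}^p(E)$ and $g_n(\xi) = f(\xi)$. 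The ratio $\lvert g_n/f\rvert$ exceeds $1$ on $(\lambda_1^{(n)}, \lambda_2^{(n)})$ but is strictly less than $1$ on the complement. Convexity of $t \mapsto t^p$ for $p \geq 1$, combined with Corollary \ref{cor:sign_free} applied at the global supremum of $\lvert f/E\rvert$ to produce a quantitative lower bound on $\lvert f/E\rvert$ in a macroscopic neighborhood of that supremum, is expected to show that the decrease in $L^p$-norm on the complement dominates the increase on $(\lambda_1^{(n)}, \lambda_2^{(n)})$, giving $\norm{g_n/E}_p < 1$ and contradicting extremality. The main obstacle is precisely this last regime: balancing the local increase against the global decrease with the right quantitative dependence on $\delta_n$ is where Corollary \ref{cor:sign_free} and the hypothesis $\varphi' \in L^{\infty}$ enter in an essential way, and where the restriction $p \geq 1$ becomes unavoidable.
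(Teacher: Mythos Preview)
Your proposal has genuine gaps and does not match the paper's argument.

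First, the case split is illusory. The zeros of the fixed entire function $f$ are a discrete set, so any sequence of consecutive pairs $\lambda_1^{(n)}<\lambda_2^{(n)}$ with $\delta_n\to 0$ must satisfy $|\lambda_i^{(n)}|\to\infty$; a bounded subsequence would force the zeros of $f$ to accumulate. Thus the ``bounded $\eta_n$'' case never occurs, and the normal--families sketch you give there (``extracts a subsequential limit containing a double real zero of $f$'') is both unnecessary and incorrect as stated: rescaling $f$ around $\eta_n$ does not produce a double zero of $f$ itself.

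Second, in the only real case $|\eta_n|\to\infty$ your argument is explicitly unfinished. The perturbation $g_n$ you propose does \emph{not} satisfy $g_n(\xi)=f(\xi)$: the rational factor equals $1$ at $\xi$ only if $\delta_n=0$. At best $g_n(\xi)/f(\xi)\to 1$, but then extremality no longer gives $\norm{g_n/E}_p\geq 1$ exactly, and you would need a quantitative comparison that you do not supply. The decisive estimate---showing that the $L^p$--decrease on the complement beats the increase on $(\lambda_1^{(n)},\lambda_2^{(n)})$---is left as ``expected'', which is precisely where the difficulty lies. Your earlier step, bounding the positive part of the variational integral uniformly in $n$, is also not justified: near $\lambda_i^{(n)}$ the factor $(x-\xi)^2/|(x-\lambda_1^{(n)})(x-\lambda_2^{(n)})|$ blows up like $(\lambda_i^{(n)}-\xi)^2/(\delta_n|x-\lambda_i^{(n)}|)$, and controlling this requires information you have not established.

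The paper's proof avoids all of this by working directly, not by contradiction. For consecutive zeros $\lambda_n<\lambda_{n+1}$ with $I_n=[\lambda_n,\lambda_{n+1}]$ one introduces
\[
\psi_n(z)=\frac{z^2 f(z)}{(z-\lambda_n)(\lambda_{n+1}-z)}\in\mathcal{H}^p(E),
\]
and estimates \emph{both} sides of the variational identity in terms of $\norm{\psi_n/E}_\infty$. The integral over $I_n$ is bounded above by $\norm{\psi_n/E}_\infty^p$ times a Beta integral in $|I_n|$. For the complement, one locates the point $x_0$ where $|\psi_n/E|$ attains its maximum, applies Corollary~\ref{cor:sign_free} there, and uses Lemma~\ref{lemma:sep_from_0} (this is where $\varphi'\in L^\infty$ enters) to obtain an interval $J_\delta^n$ of length $\gtrsim\delta$, at distance $\gtrsim\delta$ from $I_n$, on which $|\psi_n(x)|\geq 2^{-1/2}\norm{\psi_n/E}_\infty|E(x)|$. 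This yields a lower bound for the complement integral, again proportional to $\norm{\psi_n/E}_\infty^p$. The factor $\norm{\psi_n/E}_\infty^p$ cancels, and what remains is an explicit inequality forcing $|I_n|$ to be bounded below by a constant depending only on $p$ and $\norm{\varphi'}_\infty$. The restriction $p\geq 1$ enters because the exponent $1-p$ on the rational factor flips sign, and one loses control of the location of $J_\delta^n$ relative to $I_n$ for $p<1$.
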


    Before we begin with the proof let us make some comments. It is obvious that $\cos(x)$ has uniformly separated zeros and that
    \begin{equation*}
        \lvert \cos(x) \vert \geq 1/2 \text{, for } x \in (\pi k - \pi/3,\pi k + \pi/3), k \in \mathbb{Z}.    
    \end{equation*}
    The key part of the second statement is that the absolute value cosine is bounded below by a uniform constant, $1/2$, on intervals containing a maxima with length bounded from below by uniform constant, $2\pi/3$. The next two lemmas are analogous statements for $A_{\alpha}$ when the derivative of the phase function is bounded.

    \begin{lemma}
        Let $\varphi' \in L^{\infty}$. Then the zeros of $A_{\alpha}$ are uniformly separated with constant independent of $\alpha$.
        \begin{proof}
            The zeros of $A_{\alpha}$ are given by the set
            \begin{equation*}
                X(\alpha) = \left\{ x \in \mathbb{R} : \varphi(x) = 2\alpha + \pi \mod 2\pi \right\}.
            \end{equation*}
            Let us label the zeros $\varphi(x_{n}) = 2\alpha + \pi + 2\pi n$, $n \in \mathbb{Z}$. The phase function has strictly positive derivative and hence it is invertible with differentiable inverse. Let $x_{n}, x_{m}$ be two zeros of $A_{\alpha}$. An application of the mean-value theorem gives
            \begin{equation*}
            \begin{split}
                \lvert x_{m} - x_{n} \rvert = \lvert \varphi^{-1}(2\alpha + \pi + 2\pi m) - \varphi^{-1}(2\alpha + \pi + 2\pi n)\rvert \\ = \lvert \partial_{x} \varphi^{-1}(y_{n,m}) \rvert \lvert 2\alpha + \pi + 2\pi m - 2\alpha + \pi + 2\pi n \rvert \geq \frac{2\pi}{\norm{\varphi'} _{\infty}} \lvert n-m \rvert,
            \end{split}
            \end{equation*}
            where $y_{n,m}$ is some point between $x_{n}$ and $x_{m}$ given by the mean-value theorem.
        \end{proof}
    \end{lemma}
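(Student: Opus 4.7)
The plan is to identify the real zeros of $A_\alpha$ with a level set of the phase function $\varphi$ and then read off the separation from the hypothesis $\varphi' \in L^\infty$. Since $A_\alpha = \mathrm{Re}(e^{i\alpha}E)$ and, by the convention $\varphi = -2\arg E$ recorded in the introduction, $E(x) = \lvert E(x) \rvert e^{-i\varphi(x)/2}$ on the real line, one immediately obtains
\begin{equation*}
    A_\alpha(x) = \lvert E(x) \rvert \cos\bigl(\alpha - \tfrac{1}{2}\varphi(x)\bigr).
\end{equation*}
Because $E$ has no real zeros, $\lvert E(x) \rvert > 0$ on $\mathbb{R}$, so the real zeros of $A_\alpha$ coincide with the level set $\{x \in \mathbb{R} : \varphi(x) \equiv 2\alpha + \pi \pmod{2\pi}\}$.

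The series expression for $\varphi'$ recorded in the introduction gives $\varphi' > 0$ pointwise, so $\varphi$ is a smooth strictly increasing bijection onto its image. The zeros of $A_\alpha$ can therefore be enumerated as $\{x_n\}_{n \in \mathbb{Z}}$ with $\varphi(x_n) = 2\alpha + \pi + 2\pi n$, and consecutive zeros satisfy $\varphi(x_{n+1}) - \varphi(x_n) = 2\pi$. By the mean value theorem there is some $c \in (x_n, x_{n+1})$ with $\varphi'(c)(x_{n+1} - x_n) = 2\pi$, and hence
\begin{equation*}
    x_{n+1} - x_n \geq \frac{2\pi}{\norm{\varphi'}_\infty},
\end{equation*}
a bound that is independent of $\alpha$.

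There is no serious obstacle here: the argument reduces to one trigonometric identity for $A_\alpha$ followed by an application of the mean value theorem to the monotone function $\varphi$. The uniformity in $\alpha$ is automatic, since $\alpha$ only translates the relevant arithmetic progression on the $\varphi$-side without changing its common difference $2\pi$. The only reason one needs $\varphi' \in L^\infty$ rather than just $\varphi' > 0$ is to convert the uniform $\varphi$-spacing of $2\pi$ into a uniform $x$-spacing via the inverse function.
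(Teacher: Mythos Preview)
Your proof is correct and follows essentially the same route as the paper: identify the zeros of $A_\alpha$ with the level set $\{\varphi(x)\equiv 2\alpha+\pi\pmod{2\pi}\}$, enumerate them by the arithmetic progression $\varphi(x_n)=2\alpha+\pi+2\pi n$, and apply the mean value theorem (you apply it to $\varphi$, the paper to $\varphi^{-1}$, which is the same computation). The only cosmetic difference is that you spell out the trigonometric identity $A_\alpha(x)=\lvert E(x)\rvert\cos(\alpha-\tfrac12\varphi(x))$ explicitly, whereas the paper simply asserts the description of the zero set.
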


    \begin{remark}
        The previous lemma is Theorem \ref{thm:sep_zeros} for $p = 2$.
    \end{remark}

    \begin{lemma}\label{lemma:sep_from_0}
        Let $\varphi' \in L^{\infty}$, $\xi \in \mathbb{R}$ and $\alpha \in [0,\pi)$. There exists a constant $\delta > 0$ depending only on $\norm{\varphi'}_{\infty}$, such that whenever $\lvert A_{\alpha}(\xi) \rvert = \lvert E(\xi) \rvert$ there exists an interval $I = (a,b)$ containing $\xi$ with 
        \begin{equation*}
            \begin{cases}
                \xi - a > \delta, \\
                b-\xi > \delta
            \end{cases}
            \text{and } \; \; \left\lvert \frac{A_{\alpha}(x)}{E(x)}\right\rvert^{2} \geq 1/2, \; x \in I.
        \end{equation*}
        \begin{proof}
            The equality $\lvert A_{\alpha}(\xi) \rvert = \lvert E(\xi) \rvert$ is equivalent to $B_{\alpha}(\xi) = 0$, which, as in the previous lemma, is seen to be equivalent to
            \begin{equation*}
                \varphi(\xi) = 2\alpha \mod 2\pi.
            \end{equation*}
            Also,
            \begin{equation*}
                \left\lvert \frac{A_{\alpha}(x)}{E(x)}\right\rvert^{2} \geq 1/2,
            \end{equation*}
            is equivalent to
            \begin{equation*}
                \left\lvert A_{\alpha}(x)\right\rvert \geq \lvert B_{\alpha}(x) \rvert.
            \end{equation*}
            Since $\varphi(x) = -2\text{arg}(E)$ and $A_{\alpha}$, $B_{\alpha}$ are the real and imaginary parts of $E_{\alpha}$ respectively this is seen to be equivalent to
            \begin{equation*}
                \lvert \sin(\alpha-2^{-1}\varphi(x)) \rvert \leq \lvert \cos(\alpha-2^{-1}\varphi(x)) \rvert. 
            \end{equation*}
            This is true if
            \begin{equation*}
                \frac{-\pi}{4} \leq \frac{\varphi(x)}{2} - \alpha \leq \frac{\pi}{4} \mod \pi.
            \end{equation*}
            Let $\varphi(\xi) = 2\alpha + 2\pi k$, $k \in \mathbb{Z}$. Let $\xi_{l}$ be the point to the left of $\xi$, such that $\varphi(\xi_{l}) = -\frac{\pi}{2} + 2\alpha + 2\pi k$. We must show that $\lvert \xi - \xi_{l} \rvert$ is bounded below uniformly in $\alpha$ and $k$. By the mean-value theorem we have
            \begin{equation*}
                \lvert \xi - \xi_{l} \rvert = \lvert \varphi^{-1}(2\alpha + 2\pi k)- \varphi^{-1}(-\frac{\pi}{2} + 2\alpha + 2\pi k)\rvert \geq \frac{1}{\norm{\varphi'}_{\infty}}\frac{\pi}{2}.
            \end{equation*}
            The same argument applies to the right and hence we may choose $\delta = \pi \left(2\norm{\varphi'}_{\infty}\right)^{-1} > 0$.
        \end{proof}
    \end{lemma}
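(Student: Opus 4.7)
The plan is to express the quantities appearing in the lemma directly in terms of the phase function and then use the upper bound $\norm{\varphi'}_\infty$ to quantify how far $x$ must move before $|A_\alpha/E|^2$ can drop below $1/2$.

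First, since $|E_\alpha| = |E|$ and $E_\alpha = A_\alpha + iB_\alpha$, I have the trigonometric representation $A_\alpha(x)/|E(x)| = \cos(\alpha - \varphi(x)/2)$ and $B_\alpha(x)/|E(x)| = \sin(\alpha - \varphi(x)/2)$, using $\varphi = -2\arg E$. Therefore the hypothesis $\lvert A_\alpha(\xi)\rvert = \lvert E(\xi)\rvert$ is equivalent to $B_\alpha(\xi) = 0$, i.e.\ to $\sin(\alpha - \varphi(\xi)/2) = 0$; so $\varphi(\xi) = 2\alpha \mod 2\pi$. On the other side, the inequality $|A_\alpha(x)/E(x)|^2 \geq 1/2$ is equivalent to $|\cos(\alpha - \varphi(x)/2)| \geq 1/\sqrt{2}$, which holds exactly when
\begin{equation*}
\varphi(x)/2 - \alpha \in [-\pi/4,\pi/4] \mod \pi.
\end{equation*}

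Since at $\xi$ we have $\varphi(\xi)/2 - \alpha \in \pi\mathbb{Z}$, it suffices to find $a<\xi<b$ so that $\varphi$ varies by at most $\pi/2$ on $(a,b)$; then on that interval $\varphi(x)/2-\alpha$ stays within $\pi/4$ of the integer multiple of $\pi$ hit at $\xi$, hence the desired inequality holds. This is where the bound on the derivative enters. The phase function is strictly increasing and its inverse is differentiable with $(\varphi^{-1})'(y) = 1/\varphi'(\varphi^{-1}(y)) \geq 1/\norm{\varphi'}_\infty$. Applying the mean value theorem to $\varphi^{-1}$ between the value $2\alpha + 2\pi k = \varphi(\xi)$ and $2\alpha + 2\pi k \pm \pi/2$ produces points $a$ and $b$ to the left and right of $\xi$ at distance at least $\pi/(2\norm{\varphi'}_\infty)$ from $\xi$, on which $\varphi$ has moved by exactly $\pi/2$. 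Choosing $\delta = \pi/(2\norm{\varphi'}_\infty)$ yields the result, and the constant depends only on $\norm{\varphi'}_\infty$ as required.

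The argument has no real obstacle: the content is genuinely the trigonometric rewriting together with a mean-value estimate. The only thing to be careful about is the reduction modulo $\pi$ in the target inequality versus the reduction modulo $2\pi$ for $B_\alpha$; but both lead to the same geometric picture, namely that increasing $|\varphi(x)/2 - \alpha|$ by at most $\pi/4$ keeps us in the safe set, and $\varphi$ can only achieve such an increase after $x$ has moved by at least $\pi/(2\norm{\varphi'}_\infty)$.
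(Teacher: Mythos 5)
Your proposal is correct and follows essentially the same route as the paper: rewrite $A_\alpha/|E|$ and $B_\alpha/|E|$ as $\cos(\alpha-\varphi/2)$ and $\sin(\alpha-\varphi/2)$, reduce the desired bound to $\varphi(x)/2-\alpha \in [-\pi/4,\pi/4] \bmod \pi$, and apply the mean value theorem to $\varphi^{-1}$ to get the same constant $\delta = \pi(2\norm{\varphi'}_\infty)^{-1}$. The only cosmetic slip is saying $\varphi$ varies by at most $\pi/2$ on all of $(a,b)$ when your construction actually gives variation at most $\pi/2$ on each side of $\xi$; the estimate you actually use, $\lvert\varphi(x)-\varphi(\xi)\rvert\le\pi/2$, is the correct one.
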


    We are now ready for the proof of Theorem \ref{thm:sep_zeros}.
    
        \begin{proof}[Proof of Theorem \ref{thm:sep_zeros}]
            Recall that $p \geq 1$ and $f = f_{p,\xi, E}$. For notational simplicity we assume $\xi = 0$. Let us index the zeros of $f$ by
            \begin{equation*}
               ... < \lambda_{-2} < \lambda_{-1} < 0 \leq \lambda_{0} < \lambda_{1} < \lambda_{2} < ...
            \end{equation*}
            Let $\lambda_{n+1} > \lambda_{n}$ be two consecutive zeros of $f$, $I_{n} = [\lambda_{n}, \lambda_{n+1}]$. The proof when the zeros are negative requires only straight forward modifications, hence we confine ourselves to the case $\lambda_{n+1} > \lambda_{n} > 0$. Applying Proposition \ref{prop:variational_int} we obtain
            \begin{equation}\label{eq:pf_thm_sep_zeros}
                \int_{I_{n}} \frac{x^{2}\lvert f(x) \rvert^{p}}{(x-\lambda_{n})(\lambda_{n+1}-x)\lvert E(x) \rvert^{p}}dx = \int_{\mathbb{R} \setminus I_{n}} \frac{x^{2}\lvert f(x) \rvert^{p}}{\lvert (x-\lambda_{n})(\lambda_{n+1}-x) \rvert \lvert E(x) \rvert^{p}}dx.
            \end{equation}
            Let
            \begin{equation*}
                \psi_{n}(z) = \frac{z^{2}f(z)}{(z-\lambda_{n})(\lambda_{n+1}-z)} \in \mathcal{H}^{p}(E).
            \end{equation*}
            The left hand-side of \eqref{eq:pf_thm_sep_zeros} can be estimated as follows
            \begin{equation*}
                \int_{I_{n}} \left\lvert \frac{x^{2}}{(x-\lambda_{n})(\lambda_{n+1}-x)}\right\rvert^{1-p}\left\lvert \frac{\psi_{n}(x)}{E(x)} \right\rvert^{p}dx \leq \norm{\psi_{n}/E}_{\infty}^{p} \int_{I_{n}} \left\lvert \frac{x^{2}}{(x-\lambda_{n})(\lambda_{n+1}-x)}\right\rvert^{1-p}dx.
            \end{equation*}
            Now we shall estimate the right hand-side of \eqref{eq:pf_thm_sep_zeros}. The idea is to localise around a maximum of the function $\psi_{n}/E$ and use our generalization of Hörmander's inequality to obtain a a lower bound. We now present the details.
            
            Let $\delta > 0$ be such that whenever $\lvert A_{\alpha}(\xi) \rvert = \lvert E(\xi) \rvert$ there exists an interval $(a,b)$ containing $\xi$ with 
            \begin{equation*}
                \begin{cases}
                    \xi - a > \delta, \\
                    b-\xi > \delta
                \end{cases}
            \text{and } \; \; \left\lvert \frac{A_{\alpha}(x)}{E(x)}\right\rvert^{2} \geq 1/2, \; x \in (a,b).
            \end{equation*}
            The existence of such a $\delta$ is guaranteed by Lemma \ref{lemma:sep_from_0}. We may assume that $\lvert I_{n} \rvert < \delta/3$. Since, $\psi_{n} \in \mathcal{H}^{p}(E)$ there exists $x_{0} \in \mathbb{R}$, such that
            \begin{equation*}
                \frac{\lvert \psi_{n}(x_{0}) \rvert}{\lvert E(x_{0}) \rvert} = \norm{\psi_{n}/E}_{\infty}.
            \end{equation*}
            Let $\alpha$ be such that $e^{i\alpha}E(x_{0}) = \lvert E(x_{0}) \rvert$. Then there exists an interval $J_{\delta}^{n}$, such that
            \begin{equation*}
                \begin{cases}
                    \lvert J_{\delta}^{n} \rvert \geq \delta/6, \\
                    \text{dist}(J_{\delta}^{n}, I_{n}) \geq \delta/6.
                \end{cases}
                \text{ and } \lvert \psi_{n}(x_{0}) \rvert \geq \norm{\psi_{n}/E}_{\infty}A_{\alpha}(x) \geq 2^{-1/2}\norm{\psi_{n}/E}_{\infty}\lvert E(x) \rvert, x \in J_{\delta}^{n}.
            \end{equation*}
            Restricting the integration in the integral in the right hand-side of \eqref{eq:pf_thm_sep_zeros} to the interval $J_{\delta}^{n}$ gives
            \begin{equation*}
                \int_{\mathbb{R} \setminus I_{n}} \frac{x^{2}\lvert f(x) \rvert^{p}}{\lvert (x-\lambda_{n})(\lambda_{n+1}-x)  E(x) \rvert^{p}}dx \geq 2^{-p/2}\norm{\psi_{n}/E}_{\infty}^{p}\int_{J_{\delta}^{n}} \left\lvert \frac{x^{2}}{(x-\lambda_{n})(\lambda_{n+1}-x)}\right\rvert^{1-p}dx.
            \end{equation*}
            Combining the estimates for the left and right hand-side we obtain
            \begin{equation}\label{eq:pf_sep_0_integ_ineq}
                \int_{I_{n}} \left\lvert \frac{x^{2}}{(x-\lambda_{n})(\lambda_{n+1}-x)}\right\rvert^{1-p}dx \geq 2^{-p/2}\int_{J_{\delta}^{n}} \left\lvert \frac{x^{2}}{(x-\lambda_{n})(\lambda_{n+1}-x)}\right\rvert^{1-p}dx.
            \end{equation}
            Our goal is now to show that this inequality implies that the length of $I_{n}$ is bounded below by a uniform constant. We shall estimate the left hand-side with
            \begin{equation}\label{eq:pf_sep_0_right_est}
                \int_{I_{n}} \left\lvert \frac{x^{2}}{(x-\lambda_{n})(\lambda_{n+1}-x)}\right\rvert^{1-p}dx \leq \lambda_{n}^{2(1-p)} \lvert I_{n} \rvert^{2p-1}B(p,p),
            \end{equation}
            where $B$ is the beta function, see Appendix A in \cite{MR3243734}. We now begin to estimate the right hand-side. The change of variable $t = \lvert I_{n} \rvert^{-1} (x-\lambda_{n})$ gives
            \begin{equation*}
                \begin{split}
                    \int_{J_{\delta}^{n}} \left\lvert \frac{x^{2}}{(x-\lambda_{n})(\lambda_{n+1}-x)}\right\rvert^{1-p}dx = \lvert I_{n} \rvert^{2p-1} \int_{\lvert I_{n} \rvert^{-1}(J_{\delta}-\lambda_{n})} \lvert \lambda_{n+1} t + (1-t)\lambda_{n} \rvert^{2(1-p)} \lvert t \rvert^{p-1} \lvert 1-t \rvert^{p-1}dt
                    \\ = \lvert I_{n} \rvert^{2p-1} \int_{\lvert I_{n} \rvert^{-1}(J_{\delta}-\lambda_{n})} \left(\lambda_{n+1} \left\lvert \frac{t}{1-t} \right\rvert^{1/2} - \left\lvert \frac{(1-t)}{t} \right\rvert^{1/2}\lambda_{n} \right)^{2(1-p)}dt.
                \end{split}
            \end{equation*}
            To get the signs right is it useful to use that in the old coordinates $t/(t-1) = (x-\lambda_{n})/(x-\lambda_{n+1})$. Since we have assumed $\lvert I_{n} \rvert \leq \delta/3$ the quantity $\lvert t/(1-t) \rvert$ is comparable to a constant, in fact
            \begin{equation*}
                \frac{1}{3} \leq \left\lvert \frac{t}{t-1} \right\rvert \leq 3, \; t \in \lvert I_{n} \rvert^{-1}(J_{\delta}-\lambda_{n}).
            \end{equation*}
            Thus for $p \geq 1$ the triangle inequality gives
            \begin{equation*}
                \int_{\lvert I_{n} \rvert^{-1}(J_{\delta}-\lambda_{n})} \left(\lambda_{n+1} \left\lvert \frac{t}{1-t} \right\rvert^{1/2} - \left\lvert \frac{(1-t)}{t} \right\rvert^{1/2}\lambda_{n} \right)^{2(1-p)}dt \geq 3^{(1-p)}\lvert I_{n} \rvert^{-1} \lvert J_{\delta}^{n} \rvert 4^{(1-p)}\lambda_{n+1}^{2(1-p)}.
            \end{equation*}
            Combining this with the upper estimate shows that
            \begin{equation*}
                \lvert I_{n} \rvert \geq B(p,p)^{-1}3^{1-p}4^{1-p}\frac{\delta}{6}2^{-p/2}\frac{\lambda_{n+1}^{2(1-p)}}{\lambda_{n}^{2(1-p)}}.
            \end{equation*}
            Since the right hand-side is bounded below independent of $n$ we are done.
        \end{proof}

        The reason we are not able to prove that the zeros are uniformly separated when $0 < p < 1$ is because we do not know the location of the maximum of $\psi_{n}/E$ and hence the location of the interval $J_{\delta}^{n}$. If one were able to prove that the maximum of $\psi_{n}/E$ is close to the interval $I_{n}$ (in some suitable sense, adjacent would be sufficient) then one could extend the argument used in the proof to all $p$.

\section{Model Spaces}\label{sec:model_spaces}

In this section we briefly explain how our results in de Branges spaces can be interpreted as results for model spaces generated by meromorphic inner functions. An inner function, $\Theta$, is a bounded holomorphic function defined on the upper half-plane satisfying $\lvert \Theta(x) \rvert = 1$, for almost every $x \in \mathbb{R}$. For any $0 < p \leq \infty$ and inner function $\Theta$ we define the model space
\begin{equation*}
    K_{\Theta}^{p} = \left\{ f \in H^{p} \cap L^{1}_{\text{loc}}(\mathbb{R}) : \Theta \overline{f} \in H^{p} \right\}.
\end{equation*}
If $p \geq 1$ the condition that the functions should be $L^{1}_{\text{loc}}$ can be dropped. A meromorphic inner function is a meromorphic function in the plane, which is an inner function in the upper half-plane. As we have seen any $E \in HB$ defines a natural meromorphic inner function via the formula $\Theta_{E} = E^{\#}/E$. It is known that the map
\begin{equation*}
    E^{-1} : \mathcal{H}^{p}(E) \to K_{\Theta_{E}}^{p},
\end{equation*}
is a surjective isometry, see, for example, Proposition 2.8. in \cite{MR2215727}. Also, for any meromorphic inner-function $\Theta$ there exists a (not unique) $E \in HB$, such that $\Theta = \Theta_{E}$, see \cite{MR0229011}. Thus all of the results imply analogous results in $K_{\Theta}^{p}$. It is worth mentioning that $\norm{\Theta_{E}'}_{\infty} = \norm{\varphi'}_{\infty}$.

\bibliographystyle{abbrv}
\bibliography{citations}

\begin{thebibliography}{10}

\bibitem{MR2264717}
A.~Baranov.
\newblock Completeness and {R}iesz bases of reproducing kernels in model
  subspaces.
\newblock {\em Int. Math. Res. Not.}, pages Art. ID 81530, 34, 2006.

\bibitem{MR4722033}
A.~Baranov, K.~Kellay, P.~Jaming, and M.~Speckbacher.
\newblock Oversampling and {D}onoho-{L}ogan type theorems in model spaces.
\newblock {\em Ann. Fenn. Math.}, 49(1):167--182, 2024.

\bibitem{MR2231626}
A.~D. Baranov.
\newblock On {$L^1$}-norms of meromorphic functions with fixed poles.
\newblock {\em Proc. Amer. Math. Soc.}, 134(10):3003--3013, 2006.

\bibitem{bondarenko2024hormanderbernhardssonsextremalfunction}
A.~Bondarenko, J.~Ortega-Cerdà, D.~Radchenko, and K.~Seip.
\newblock {H}\"ormander and {B}ernhardsson's extremal function {I}, 2024.

\bibitem{point_eval_in_PW}
O.~F. Brevig, A.~Chirre, J.~Ortega-Cerdà, and K.~Seip.
\newblock Point evaluation in {P}aley–{W}iener spaces.
\newblock {\em Journal d'Analyse Mathématique}, 2024.

\bibitem{MR0229011}
L.~de~Branges.
\newblock {\em Hilbert spaces of entire functions}.
\newblock Prentice-Hall, Inc., Englewood Cliffs, N.J., 1968.

\bibitem{MR1111913}
K.~M. Dyakonov.
\newblock Entire functions of exponential type and model subspaces in {$H^p$}.
\newblock {\em Zap. Nauchn. Sem. Leningrad. Otdel. Mat. Inst. Steklov. (LOMI)},
  190:81--100, 186, 1991.

\bibitem{MR1923406}
K.~M. Dyakonov.
\newblock Differentiation in star-invariant subspaces. {I}. {B}oundedness and
  compactness.
\newblock {\em J. Funct. Anal.}, 192(2):364--386, 2002.

\bibitem{MR2597943}
L.~C. Evans.
\newblock {\em Partial differential equations}, volume~19 of {\em Graduate
  Studies in Mathematics}.
\newblock American Mathematical Society, Providence, RI, second edition, 2010.

\bibitem{MR2450237}
J.~B. Garnett and D.~E. Marshall.
\newblock {\em Harmonic measure}, volume~2 of {\em New Mathematical
  Monographs}.
\newblock Cambridge University Press, Cambridge, 2008.
\newblock Reprint of the 2005 original.

\bibitem{MR4417628}
D.~V. Gorbachev.
\newblock Sharp {B}ernstein-{N}ikolskii inequalities for polynomials and entire
  functions of exponential type.
\newblock {\em Chebyshevski\u i\ Sb.}, 22(5):58--110, 2021.

\bibitem{MR4162465}
D.~V. Gorbachev and I.~A. Mart'yanov.
\newblock The interrelation between {N}ikolski\u i-{B}ernstein constants for
  trigonometric polynomials and entire functions of exponential type.
\newblock {\em Chebyshevski\u i\ Sb.}, 20(3):143--153, 2019.

\bibitem{MR3243734}
L.~Grafakos.
\newblock {\em Classical {F}ourier analysis}, volume 249 of {\em Graduate Texts
  in Mathematics}.
\newblock Springer, New York, third edition, 2014.

\bibitem{MR1447633}
V.~P. Havin and B.~J\"oricke.
\newblock The uncertainty principle in harmonic analysis [{MR}1129019
  (93e:42001)].
\newblock In {\em Commutative harmonic analysis, {III}}, volume~72 of {\em
  Encyclopaedia Math. Sci.}, pages 177--259, 261--266. Springer, Berlin, 1995.

\bibitem{MR72210}
L.~H\"ormander.
\newblock Some inequalities for functions of exponential type.
\newblock {\em Math. Scand.}, 3:21--27, 1955.

\bibitem{MR1260445}
L.~H\"ormander and B.~Bernhardsson.
\newblock An extension of {B}ohr's inequality.
\newblock In {\em Boundary value problems for partial differential equations
  and applications}, volume~29 of {\em RMA Res. Notes Appl. Math.}, pages
  179--194. Masson, Paris, 1993.

\bibitem{MR114918}
I.~I. Ibragimov.
\newblock Some inequalities for entire functions of exponential type.
\newblock {\em Izv. Akad. Nauk SSSR Ser. Mat.}, 24:605--616, 1960.

\bibitem{instanes2024optimizationproblempointevaluationpaleywiener}
S.~M. Instanes.
\newblock An optimization problem and point-evaluation in {P}aley-{W}iener
  spaces, 2024.

\bibitem{MR27332}
J.~Korevaar.
\newblock An inequality for entire functions of exponential type.
\newblock {\em Nieuw Arch. Wiskunde (2)}, 23:55--62, 1949.

\bibitem{MR1400006}
B.~Y. Levin.
\newblock {\em Lectures on entire functions}, volume 150 of {\em Translations
  of Mathematical Monographs}.
\newblock American Mathematical Society, Providence, RI, 1996.
\newblock In collaboration with and with a preface by Yu.\ Lyubarskii, M. Sodin
  and V. Tkachenko, Translated from the Russian manuscript by Tkachenko.

\bibitem{MR3317903}
E.~Levin and D.~Lubinsky.
\newblock {$L_p$} {C}hristoffel functions, {$L_p$} universality, and
  {P}aley-{W}iener spaces.
\newblock {\em J. Anal. Math.}, 125:243--283, 2015.

\bibitem{MR3691058}
D.~S. Lubinsky.
\newblock Scaling limits of polynomials and entire functions of exponential
  type.
\newblock In {\em Approximation theory {XV}: {S}an {A}ntonio 2016}, volume 201
  of {\em Springer Proc. Math. Stat.}, pages 219--238. Springer, Cham, 2017.

\bibitem{MR2215727}
N.~Makarov and A.~Poltoratski.
\newblock Meromorphic inner functions, {T}oeplitz kernels and the uncertainty
  principle.
\newblock In {\em Perspectives in analysis}, volume~27 of {\em Math. Phys.
  Stud.}, pages 185--252. Springer, Berlin, 2005.

\bibitem{MR2628803}
M.~Mitkovski and A.~Poltoratski.
\newblock P\'olya sequences, {T}oeplitz kernels and gap theorems.
\newblock {\em Adv. Math.}, 224(3):1057--1070, 2010.

\bibitem{MR1923965}
J.~Ortega-Cerd\`a and K.~Seip.
\newblock Fourier frames.
\newblock {\em Ann. of Math. (2)}, 155(3):789--806, 2002.

\bibitem{alexei_rupam_bounded_phase}
A.~Poltoratski and R.~Rupam.
\newblock Restricted interpolation by meromorphic inner functions:.
\newblock {\em Concrete Operators}, 3, 07 2016.

\bibitem{MR3890099}
C.~Remling.
\newblock {\em Spectral theory of canonical systems}, volume~70 of {\em De
  Gruyter Studies in Mathematics}.
\newblock De Gruyter, Berlin, 2018.

\bibitem{https://doi.org/10.48550/arxiv.1408.6022}
R.~Romanov.
\newblock Canonical systems and de {B}ranges spaces, 2014.

\bibitem{MR3631454}
R.~Rupam.
\newblock Uniform boundedness of the derivatives of meromorphic inner functions
  on the real line.
\newblock {\em J. Anal. Math.}, 131:189--206, 2017.

\bibitem{MR29448}
J.~G. Wendel.
\newblock Note on the gamma function.
\newblock {\em Amer. Math. Monthly}, 55:563--564, 1948.

\end{thebibliography}

\end{document}